\documentclass[10pt]{amsart}
\usepackage{hyperref}
\usepackage{amsmath,amsfonts,mathrsfs,amssymb,latexsym,array,mathtools}
\usepackage[all]{xy}
% %\usepackage[backref=page]{hyperref}
\newcommand{\N}{\mathcal{S}} % semigroup notation
 % artinian algebra A
 % ideal
 % compactified moduli space M_N
\newcommand{\M}{{\mathcal{M}_{g,1}^{\N}}} % moduli space M_N
\newcommand{\C}{\mathcal{C}} % curve C
\newcommand{\D}{\mathcal{D}} % curve D
 % quadratic approximation
 % quasi-cone X
\newcommand{\LO}{\mathcal{O}} % local ring O
 % tangent space
 % G
\renewcommand{\k}{\mathbf{k}} % ground field k
\newcommand{\proj}{\mathbb{P}} % projective space
 % natural numbers
\renewcommand{\l}{\ell} % gaps
\newcommand{\F}[1]{F_{#1}} % polynomials F_{si}
 % affine space
\usepackage{color}
 %vertex
\renewcommand{\sharp}{\#}

\DeclareMathOperator*{\codim}{codim} % codimension
\DeclareMathOperator*{\Spec}{Spec} %
\DeclareMathOperator*{\ewt}{ewt}

\DeclareMathOperator*{\End}{End}
\newtheorem{question}{Question}[section]

\newtheorem{thm}{Theorem}[section]
\newtheorem{lem}[thm]{Lemma}
\newtheorem{cor}[thm]{Corollary}

\newtheorem{rmk}[thm]{Remark}
\newtheorem*{syzlem*}{Syzygy Lemma}
\def\c[#1,#2]{{c_{{#1},#2}}}
\def \f[#1,#2]{{f_{#1}^{(#2)}}}
\def \g[#1,#2]{{g_{#1}^{(#2)}}}

\keywords{Weierstrass points, moduli of curves, syzygies, cotangent complex}

\subjclass[2010]{14H55, 14H10, 13D02 \and 14B04}

\begin{document}

\title[On the dimension of $\M$]{On the dimension of the stratum of the moduli of pointed curves by Weierstrass gaps}

\author{Andr\'{e} Contiero}
\address{Departamento de Matem\'atica, ICEx, UFMG. Av. Ant\^onio Carlos 6627, 30123-970 Belo Horizonte MG, Brazil}
\email{contiero@ufmg.br}%\footnote{Partially supported by \emph{Universal CNPq 486468/2013-5 } and by \emph{Programa Institucional de Aux\'ilio \`a Pesquisa de Docentes Rec\'em-Contratados - UFMG}}}

\author{Aislan Leal Fontes}
\address{Departamento de Matem\'atica, UFS - Campus Itabaiana. Av. Vereador Ol\'impio Grande s/n, 49506-036 Itabaiana/SE, Brazil}
\email{aislan@ufs.br}

\author{Jhon Quispe Vargas}
\address{Departamento de Matem\'atica, ICEx, UFMG. Av. Ant\^onio Carlos 6627, 30123-970 Belo Horizonte MG, Brazil}
\email{jhon.quispev@gmail.com}

\begin{abstract}
The dimension of the moduli space of smooth pointed curves with
prescribed Weierstrass semigroup at the market point is computed
%varies through 
for three families of symmetric semigroups of multiplicity six.
%is computed. 
We also 
collect the dimensions of such moduli spaces for all
semigroups of genus not greater than seven. A question related to an improvement of a
Deligne--Pinkham's bound is also formulated, suggesting that the positive graded part of the 
first module of the cotangent complex associated to
a semigroup algebra is a missing invariant. 
The answer for this question is positive for all 
these moduli that we know their dimensions.
%In this paper we apply the method introduced by Contiero--Stoehr to compute the dimension of suitable moduli spaces $\mathcal{M}_{g,1}^{\N}$ of 
%complete integral pointed Gorenstein curves with a prescribe symmetric Weierstrass semigroup $\N$. More precisely, we compute the exactly dimension 
%of $\mathcal{M}_{g,1}^{\N}$ for three families of symmetric semigroups of multiplicity six. We also formulate a question related to an improvement of 
%Deligne's upper bound whose answer is positive fo all moduli $\mathcal{M}_{g,1}^{\N}$ that we know its dimension.
\end{abstract}

\maketitle
%\dedicatory{ }

\section{Introduction}

Given a smooth pointed curve $(\C,P)\in\mathcal{M}_{g,1}$ of genus $g>1$, its associated Weiers\-trass semigroup $\N$ 
consists of the set of nonnegative integers $n$, called nongaps, such that there is a rational function on $\C$ with
pole divisor $nP$. Equivalently, $n$ is a nongap if and only if 
$\mathrm{H}^0(\C, \LO_{C}(n-1)P)\subsetneq\mathrm{H}^{0}(\C, \LO_{\C}(nP))$. Follows from the Riemann--Roch theorem 
that the set of positive integers that are not in $\N$ (the set of gaps) has magnitude exactly $g$.

Inverting the above considerations, given a numerical semigroup $\N$ of genus $g>1$, let $\M$ be the moduli 
space parameterizing smooth pointed curves whose associa\-ted Weierstrass semigroup is $\N$. It is very known that 
the moduli space $\M$ can be empty. If it is nonempty, since the $i$-th gap of a Weierstrass
semigroup is an upper semicontinuous function, the moduli space $\M$ is a locally closed subspace of $\mathcal{M}_{g,1}$.
Hence, the moduli spaces $\M$ give a stratification of $\mathcal{M}_{g,1}$.
Naturally, many general problems about this stratification arise. But, the simplest problems of determine when $\M$ is nonempty, 
describe its irreducible components and their dimensions, remain unsolved. 

In this paper we focus on the problem  of find the dimension of a given $\M$. There are two general and important
bounds for the dimension of $\M$. On the one hand a Deligne--Pinkham's upper
bound, cf. \cite{Del73, Pi74}, and on the other hand a lower bound given by Pflueger in \cite{N16}. The section two of the 
present paper details this two bounds, it is also showed that this two bounds can be not attained.

In section three of this paper we recall an important construction given by Stoehr \cite{St93} and Contiero--Stoehr \cite{c2013}
 of a compactification of $\M$, when $\N$ is symmetric, by allowing canonical Gorenstein curves at its bordering. 
 
 In section four  we compute the dimension of $\M$ when $\N$ runs over the following two families of symmetric semigroups, 
 $\N:=<6,3+6\tau,4+6\tau,7+6\tau,8+6\tau>$ and $\N:=<6,7+6\tau,8+6\tau,9+6\tau,10+6\tau>$, with $\tau>0$. The method to compute the dimension
 is to construct a compactification of $\M$ as a quotient of an affine quasi-cone $\mathcal{X}$ by an action of the multiplicative group
 $\mathbb{G}_{m}(\k)$, this approach gives a compactification of the moduli space $\M$ constructed by Pinkham in \cite{Pi74}
 using equivariant deformation theory. Then we realize a quadratic approximation of $\mathcal{X}$ as a scheme over a
 suitable artinian algebra. Hence we get an upper bound for the dimension of $\M$ which is better than Deligne--Pinkham's
 bound and it is equal to Pflueger's lower bound.
 
 Finally, in section five we collect the dimensions of $\M$ for all semigroups of genus not greater than seven, and also
 for three families of symmetric semigroups. Therefore, we can verify that for all these semigroups the dimension
 of $\M$ is equal to Pflueger's lower bound and also it is equal to 
 \begin{equation}\label{chute}
 2g-2+\lambda(\N)-\dim\mathrm{T}^{1,+}(\k[\N]),
 \end{equation} where
 $\dim\mathrm{T}^{1,+}(\k[\N])$ stands for the dimension of the positive graded part of the first module of the cotangent complex associated
 to the semigroup algebra $\k[\N]$ introduced by Lichtenbaum and Schlessinger in \cite{LS}. We also noted that there are examples where 
 the Pflueger's bound does not provide the exact dimension of $\M$, but the above possible bound \eqref{chute}  does, for example $\N=<6,7,8>$ and $\N=<6,7,15>$. 
 As a final result we show that Pflueger's bound is not greater than $2g-2+\lambda(\N)-\dim\mathrm{T}^{1,+}(\k[\N])$ for any numerical
 semigroup $\N$ of genus $g>0$.

\section{General bounds for $\dim\M$}

Let $\C$ be a projective reduced algebraic curve defined over an algebraically closed field $\k$ and $P\in C$ a rational point. 
Set $\delta:=\dim_{\k} \overline{\LO}/{\LO}$ the singularity degree of $P$ and 
$\mu:=\dim(\mathrm{Der}_{\k}(\overline{\LO},\overline{\LO})/\mathrm{Der}_{\k}(\LO,\LO))$, where
$\mathrm{Der}_{\k}(R,M)$ is the module of $\k$-derivations from $R$ to $M$.
By analyzing three different parameters spaces, using a result of Rim \cite[Cor. 2.10]{Rim} and the
Kodaira--Spencer correspondence, Deligne \cite[Thm 2.27]{Del73} established the following formula.

\begin{thm}[Deligne's formula]\label{delformula}
For any smoothing component $E$ of the formal versal space of deformations of $\LO$, it follows
$$\dim E=3\delta-\mu\,.$$
\end{thm}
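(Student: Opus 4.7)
The plan is to follow Deligne's strategy of introducing three auxiliary deformation functors, as hinted in the preamble, and comparing their dimensions via Rim's Corollary 2.10 \cite{Rim} together with the Kodaira--Spencer correspondence. The three functors are $\mathrm{Def}_{\LO}$ (deformations of the local ring at $P$), $\mathrm{Def}_{\overline{\LO}}$ (deformations of the normalization), and $\mathrm{Def}_{\overline{\LO}/\LO}$ (deformations of the pair, i.e.\ of the normalization morphism); the smoothing component $E$ lives inside the versal base of $\mathrm{Def}_{\LO}$.

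The first step is to use that $\overline{\LO}$ is regular (the semilocal ring of a smooth curve above $P$), so $\mathrm{Def}_{\overline{\LO}}$ is formally smooth with trivial base, and the essential comparison reduces to the forgetful map $\mathrm{Def}_{\overline{\LO}/\LO} \to \mathrm{Def}_{\LO}$. Kodaira--Spencer identifies its fiber over a smooth point with a torsor under the cokernel $\mathrm{Der}_{\k}(\overline{\LO},\overline{\LO})/\mathrm{Der}_{\k}(\LO,\LO)$, a $\k$-vector space of dimension $\mu$ by definition. On the smoothing components this yields $\dim E^{\mathrm{pair}} = \dim E + \mu$, where $E^{\mathrm{pair}}$ denotes the corresponding smoothing component upstairs.

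The second step is to compute $\dim E^{\mathrm{pair}}$ directly via Rim's corollary. Because $\LO \hookrightarrow \overline{\LO}$ has $\k$-colength $\delta$ and $\overline{\LO}$ is a product of discrete valuation rings, the deformation theory of the birational morphism splits into local contributions of size three per unit of $\delta$ (geometrically, one parameter for the location of an emerging node on each of the two branches, plus one parameter that actually smoothes it). Rim's formula thus gives $\dim E^{\mathrm{pair}} = 3\delta$, and combining with Step 1 yields $\dim E = 3\delta - \mu$.

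The main obstacle is showing that the forgetful map $\mathrm{Def}_{\overline{\LO}/\LO}\to\mathrm{Def}_{\LO}$ is equidimensional with fiber of dimension $\mu$ throughout $E$, and not merely at a generic point, and similarly that the value $3\delta$ is attained as the dimension of an irreducible component of $\mathrm{Def}_{\overline{\LO}/\LO}$ rather than as a bare upper bound. Both are precisely the content of Rim's Corollary 2.10 \cite{Rim}, so the bulk of the real work is packaged into that statement; once its hypotheses are verified for our pair $(\LO,\overline{\LO})$, the formula $3\delta - \mu$ emerges by the additivity of dimensions under the forgetful map, as made rigorous by Kodaira--Spencer.
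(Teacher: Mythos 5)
The paper does not actually prove Theorem \ref{delformula}: it is quoted from Deligne \cite[Thm 2.27]{Del73}, and the only indication of the argument is the one-line remark that Deligne analyzes three parameter spaces using Rim's Corollary 2.10 and the Kodaira--Spencer correspondence. Your proposal therefore has to stand on its own, and as written it does not: both of the quantities $3\delta$ and $\mu$ enter through assertions that are not proved and, in the case of Step 1, appear to be false as stated.

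Concretely, Step 1 claims that the forgetful map $\mathrm{Def}_{\overline{\LO}/\LO}\to\mathrm{Def}_{\LO}$ restricts over the smoothing component $E$ to a fibration with $\mu$-dimensional fibers, whence $\dim E^{\mathrm{pair}}=\dim E+\mu$. But a deformation of the normalization morphism induces an equinormalizable, hence $\delta$-constant, deformation of $\LO$, so the image of this forgetful map lies in the $\delta$-constant stratum and cannot dominate a smoothing component, whose generic fiber has $\delta=0$. The node already refutes the identity: there $\dim E=1$ and $\mu=2$, while deformations of the normalization morphism (two smooth transverse branches) are essentially trivial, so there is no $3$-dimensional $E^{\mathrm{pair}}$ fibered over $E$. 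Deligne's correction term does come from the reparametrization freedom $\mathrm{Der}_{\k}(\overline{\LO},\overline{\LO})/\mathrm{Der}_{\k}(\LO,\LO)$, but it is extracted from the normalization of the total space of a one-parameter smoothing family, not from fibering deformations of the pair over $\mathrm{Def}_{\LO}$. Step 2 is likewise only a heuristic: ``three parameters per unit of $\delta$ (two branch positions and one smoothing parameter)'' presumes that the singularity deforms to $\delta$ nodes inside the given component and is in any case a mnemonic rather than a computation; you never state what Rim's Corollary 2.10 asserts nor verify its hypotheses for $(\LO,\overline{\LO})$, so the claim $\dim E^{\mathrm{pair}}=3\delta$ is unsupported. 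As it stands the proposal is an outline of a strategy whose two essential steps are missing, and whose first step needs to be replaced by a different comparison before the count can go through.
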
 A smoothing component stands for an irreducible component whose fiber over its generic
point does not intersect the singular locus of the total space.

An interesting consequence of Deligne's theorem can be obtained by assuming that $\C$ is a monomial curve. In this case,
by computing the right hand-side of Deligne's formula, do not assuming the smoothing condition, using
Pikham's equivariant deformation theory \cite[Thm. 13.9]{Pi74},  Rim and Vitulli \cite[\S 6]{RV77} noted
that:

\begin{thm}[Deligne--Pinkham's bound]\label{delupper} For any numerical semigroup $\N$,
 $$\dim \M\leq 2g-2+\lambda(\N)=3\delta-\mu$$
where $\lambda(\N)$ is the number of gaps $\l$ such that $\l+n\in\N$ whenever $n$ is a nongap.
\end{thm}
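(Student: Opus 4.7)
The plan is to specialize Deligne's formula (Theorem \ref{delformula}) to the monomial curve $C_\N:=\Spec\k[\N]$ associated to $\N$, and then to identify $3\delta-\mu$ with $2g-2+\lambda(\N)$ by an explicit computation on the semigroup algebra.

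For the inequality $\dim\M\le 3\delta-\mu$, the main tool is Pinkham's equivariant deformation theory \cite{Pi74}. The semigroup algebra $\k[\N]$ is $\N$-graded, so $C_\N$ carries a $\G_m$-action, and every smooth pointed curve with Weierstrass semigroup $\N$ arises as the generic fiber of an equivariant deformation of $C_\N$. Hence $\M$ is embedded (in the compactification recalled in Section 3) into the $\G_m$-quotient of an open subset of the graded part of the versal deformation space of $C_\N$ which corresponds to the smoothing directions. Every smoothing component of this versal space has dimension at most $3\delta-\mu$ by Theorem \ref{delformula}, so this quotient bounds $\dim\M$ by $3\delta-\mu$.

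For the equality $3\delta-\mu=2g-2+\lambda(\N)$, both invariants are computed directly on the monomial curve. The normalization of $\LO=\k[\N]$ is $\overline{\LO}=\k[t]$, so $\delta=\dim_\k\k[t]/\k[\N]$ is the number of gaps of $\N$, namely $g$. For $\mu$, I would use that every $\k$-derivation of $\k[\N]$ extends uniquely to a derivation $f(t)\partial_t$ of the fraction field $\k(t)$, and that the $\N$-grading forces $f$ to decompose into a sum of homogeneous monomials $t^k\partial_t$. A direct verification shows that $t^k\partial_t$ preserves $\k[\N]$ if and only if $n+k-1\in\N$ for every positive nongap $n$, equivalently if and only if $\ell:=k-1$ is either a nongap of $\N$ or a gap $\ell$ with $\ell+n\in\N$ for every nongap $n$---that is, a ``good gap'' in the sense of $\lambda(\N)$. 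Enumerating the monomials of $\mathrm{Der}_\k(\overline{\LO},\overline{\LO})=\k[t]\partial_t$ that fail this condition expresses $\mu$ in terms of $g$ and $\lambda(\N)$, and substituting into $3\delta-\mu$ yields the claimed identity.

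The main delicacy is the weight bookkeeping that mediates between Pinkham's graded deformation functor and Deligne's formula: one must verify that the deformations attached to the points of $\M$ indeed sit on smoothing components of the versal space, and check that passing to the $\G_m$-quotient adjusts the dimension by the correct amount. Once these conventions are pinned down, the combinatorial identification of $3\delta-\mu$ with $2g-2+\lambda(\N)$ reduces to a routine enumeration of the graded pieces of $\mathrm{Der}_\k(\overline{\LO},\overline{\LO})/\mathrm{Der}_\k(\LO,\LO)$.
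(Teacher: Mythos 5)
Your overall strategy is the right one and is essentially the paper's (which simply cites Rim--Vitulli): the inequality comes from Pinkham's equivariant deformation theory combined with Deligne's formula, and the identification of $3\delta-\mu$ is a graded computation on $\k[\N]$. The problem is that the final step you call a ``routine enumeration'' does not, as written, yield the claimed identity; the discrepancy is exactly the $\G_m$-quotient bookkeeping that you defer. Carrying out your own enumeration: $\delta=g$, and a monomial derivation $t^k\partial_t$ of $\overline{\LO}=\k[t]$ preserves $\k[\N]$ iff $n+k-1\in\N$ for every positive nongap $n$. This fails for $k=0$ (since $n_1-1$ is a gap), and for $k\geq 1$ it fails precisely when $\l:=k-1$ is one of the $g-\lambda(\N)$ gaps not counted by $\lambda(\N)$. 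Hence $\mu=g-\lambda(\N)+1$ and
\[
3\delta-\mu=2g-1+\lambda(\N),
\]
which is one more than $2g-2+\lambda(\N)$. Sanity check with $\N=\langle 2,3\rangle$: here $g=\delta=1$, $\lambda(\N)=1$, $\mu=1$, so $3\delta-\mu=2$ (the two-parameter versal family $y^2=x^3+ax+b$ of the cusp), while $2g-2+\lambda(\N)=1=\dim\mathcal{M}_{1,1}$.

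The missing unit is precisely the point you flag but do not resolve in your last paragraph: in Pinkham's construction, $\M$ is an open subset of the quotient by $\G_m$ of (the smoothing locus of) the negatively graded part of the versal base, and passing to that quotient drops the dimension by one. The correct chain is therefore $\dim\M\leq \dim E-1\leq 3\delta-\mu-1=2g-2+\lambda(\N)$, where $E$ is a smoothing component as in Theorem \ref{delformula}. This does give the substantive inequality of the statement, but it also shows that your assertion that substituting the enumeration into $3\delta-\mu$ ``yields the claimed identity'' would fail if executed literally: with the definition of $\mu$ given before Theorem \ref{delformula}, the identity that actually comes out is $2g-2+\lambda(\N)=3\delta-\mu-1$. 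You need either to track the $-1$ from the $\G_m$-quotient explicitly and adjust the displayed equality accordingly, or to verify that Rim--Vitulli's normalization of $\mu$ (the one the cited equality implicitly uses) differs by one from the quotient of derivation modules you are computing. Until that is pinned down, the last step of the proposal is the one place where the argument does not close.
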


Deligne--Pinkham's upper bound is attained, Rim and Vitulli showed, cf. \cite[Cor. 5.14]{RV77}, that if $\N$ is 
\textit{negatively graded}, then $\dim \M=2g-2+\lambda(\N)$. A numerical semigroup is negatively
graded if the first cohomology module of the cotangent complex associated to the semigroup algebra
$\k[\N]$ is negatively graded. Despite this, there are families of semigroups where Deligne--Pinkham's bound is far from
being tight, see sections 4 and 5 below, specially the tables \ref{tab1} and \ref{tab2} in section 5 of this paper.

On the other hand, Pflueger in \cite{N16} produced an upper bound for the codimension
of $\M$ seen it as a locally closed subset of $\mathcal{M}_{g,1}$. His bound
is an improvement of a bound given by Eisenbud and Harris in \cite{EH87}. Pflueger introduced 
the \textit{effective weight} of a numerical semigroup $\N$,
$$\ewt(\N):=\displaystyle\sum_{\mbox{gaps}\ l_i}^{}(\#\ \mbox{generators}\ n_j<l_i),$$
as a substitute of the weight of a numerical semigroup which appears in Eisenbud--Harris bound.
More specifically, Pflueger showed \cite[Thm. 1.2]{N16}:
%More precisely, using the theory of limit linear series developed by Osserman \cite{Oss1, Oss2, Oss3}, Pflueger showed:

\begin{thm}(Pflueger's bound)\label{NPbound}
If $\mathcal{M}_{g,1}^{\N}$ is nonempty, and $X$ is any irreducible component of it, then
\begin{align*}
\dim X\geq 3g-2-\ewt(\N).
\end{align*}
\end{thm}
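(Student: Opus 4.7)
The plan is to follow the degeneracy-locus / vanishing-sequence approach of Eisenbud--Harris and Pflueger. On the universal pointed curve $\pi\colon\mathcal{C}\to\mathcal{M}_{g,1}$ with marked section $P$, the pushforwards $E_{n}:=\pi_{*}\LO_{\mathcal{C}}(nP)$ are coherent sheaves whose ranks jump at nongaps of the fibrewise Weierstrass semigroup, and $\M$ is the locally closed substack on which the jumps occur exactly at the elements of $\N$. The goal is to realise an arbitrary component $X$ locally as a degeneracy locus whose expected codimension is $\ewt(\N)$, and then invoke the standard codimension inequality.

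First I would package the semigroup condition as a Schubert-style incidence inside a Grassmannian bundle on $\mathcal{M}_{g,1}$ built from $E_{N}$ for $N\gg 0$. For each gap $\l_{i}$ of $\N$ one asks $H^{0}(C,\l_{i}P)=H^{0}(C,(\l_{i}-1)P)$, a condition of expected codimension $\l_{i}-i$. Summing over the $g$ gaps, the classical Eisenbud--Harris estimate $\codim X\le\wt(\N)=\sum_{i}(\l_{i}-i)$ appears.

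The refinement to $\ewt(\N)$ uses the generating set $n_{1}<\cdots<n_{k}$ of $\N$. Given local sections $f_{n_{j}}$ of $E_{n_{j}}$ realising these generators, every monomial $\prod f_{n_{j}}^{a_{j}}$ automatically realises the nongap $\sum a_{j}n_{j}$; hence, when the $\l_{i}-i$ equations cutting out the $i$-th gap condition are expressed in terms of the $f_{n_{j}}$, only those indexed by generators $n_{j}<\l_{i}$ remain independent, the remainder being forced by the monomial identities. The number of surviving independent equations attached to $\l_{i}$ is thus $\#\{n_{j}<\l_{i}\}$, and summing over gaps gives
\[
\sum_{\l_{i}\ \text{gap}}\#\{n_{j}\ \text{generator}:n_{j}<\l_{i}\}=\ewt(\N).
\]
Applying the standard degeneracy-locus codimension bound to this refined presentation then yields $\codim X\le\ewt(\N)$ and hence $\dim X\ge 3g-2-\ewt(\N)$.

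The main obstacle is precisely this replacement step: one has to verify, in families rather than pointwise, that the gap-by-gap conditions can be rewritten using only the generator-indexed ones, and that no unexpected incidence forces an additional drop in dimension. This is the combinatorial and Brill--Noether-theoretic heart of Pflueger's argument, and relies on a careful analysis of how the vanishing sequences of the linear series $|nP|$ interact across the strata of $\mathcal{M}_{g,1}$.
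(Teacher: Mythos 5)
First, a point of reference: the paper does not prove this statement at all --- it is quoted from Pflueger \cite[Thm.~1.2]{N16} --- so the only meaningful comparison is with Pflueger's own argument. Your overall strategy (cut $\M$ out locally near a point of $X$ by $\ewt(\N)$ determinantal conditions, then use the standard fact that every component of a locus cut out by $c$ local equations has codimension at most $c$) is the right shape, and you have the inequality pointing the correct way. The gap is the step you yourself flag as ``the main obstacle'': the reduction from the per-gap conditions to only $\ewt(\N)$ generator-indexed ones is asserted, not established, and it is \emph{not} a consequence of the ``standard degeneracy-locus codimension bound'' applied to the rank conditions $h^0(n_jP)\ge \#\{s\in\N:\,s\le n_j\}$. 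Concretely, take $\N=\langle 3,7\rangle$, of genus $6$, with $\ewt(\N)=6$: the two generator conditions translate into the Schubert condition $a_3\ge 3$, $a_5\ge 7$ on the vanishing sequence of the canonical series at $P$, whose codimension is $1+1+3+3=8>6$, so the naive degeneracy-locus count only yields $\dim X\ge 3g-2-8$, strictly weaker than the theorem. To get exactly $d_j:=\#\{\mbox{gaps}>n_j\}$ equations per generator (and $\sum_j d_j=\ewt(\N)$) one must argue inductively: once sections $f_{n_1},\dots,f_{n_{j-1}}$ with exact pole orders $n_1,\dots,n_{j-1}$ are in hand, their monomials span a locally free subsheaf of corank one inside the expected $\pi_*\mathcal{O}(n_jP)$, and ``$n_j$ is a nongap'' becomes a corank-$(1,d_j)$ minor condition on the quotient. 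That quotienting by the product subbundle is exactly what your phrase ``forced by the monomial identities'' must accomplish, but it has to be carried out as a statement about sheaves over a neighborhood in $\mathcal{M}_{g,1}$, not left as a pointwise heuristic.

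There is a second, unaddressed half of the same gap: Krull's bound controls the components of the locus $Z$ cut out by your $\ewt(\N)$ equations, not of $X$ itself, so you must also show that near a general point of $X$ one has $Z\subseteq\overline{\M}$, i.e.\ that $X$ really is a component of $Z$. (This is where a naive choice of equations fails badly: the $g$ conditions ``each gap of $\N$ stays a gap'' would give the false bound $\dim X\ge 2g-2$ for $\N=\langle 6,7,8\rangle$, precisely because $\overline{\M}$ is not a component of the locus they cut out.) The needed verification is that, by upper semicontinuity of each $h^0(nP)$, imposing $h^0(n_jP)\ge\#\{s\in\N:\,s\le n_j\}$ for all generators forces $\N\subseteq\N_P$, and a numerical semigroup containing $\N$ with the same genus equals $\N$. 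With these two ingredients supplied, your outline becomes Pflueger's proof; without them it is a plan rather than an argument.
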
 

Although Pflueger's bound is attained for a lot of classes of numerical semigroups, 
there are examples where his bound does not give the exact dimension
of $\M$, an example was given by Pflueger himself, cf. \cite[pg. 12]{N16}, taking the follow\-ing symmetric semigroup $\N:=<6,7,8>$
of genus $9$.
The moduli variety for this particular semigroup can be completely described by classical results as follows.
Let $\N$ be a symmetric semigroup generated by less than $5$ elements. Using Pinkham's equivariant deformation theory \cite{Pi74},
%complete intersection theory and
a quasi-homogeneous version of 
Buchsbaum-Eisenbud's structure theorem
for Gorenstein ideals of codimension $3$ (see
\cite[p.\,466]{BE77}), one can deduce that
the affine monomial curve 
$\Spec\ \k[\N] = \Spec\ \k[t^{m_1},t^{m_2},t^{m_3},t^{m_4}]$
can be negatively smoothed without any obstructions (see \cite{Bu80},
\cite{W79} \cite[Satz 7.1]{W80}), hence
$\dim \M = \dim \proj(T^{1,-}_{\k[\N]|\k})$, and therefore, by the Jacobian criterion and elimination theory,
\begin{equation}\label{truco}
\overline{\M} = \proj(T^{1,-}_{\k[\N]|\k}).
\end{equation} If $\N=<6,7,8>$, then $\dim\M=14$, see theorem \ref{t1busc} in section 5 below. In contrast,
Pflueger's bound gives $3g-2-\mathrm{ewt}(\N)=27-2-12=13$, while Deligne--Pinkham's
bound provides $2g-2+\lambda(\N)=18-2+1=17$.  In the same way, if we take the 
following symmetric semigroup $\N:=<6,7,15>$ of genus $12$, Pflueger's bound gives
$17$, while Deligne--Pinkham's bound $23$. But one can verify using \eqref{truco} and theorem \ref{t1busc} that 
$\dim\M=\dim\mathbb{P}(T^{1,-}(\k[\N]))=18$.

In a later paper, cf. \cite{NP2}, Pflueger made a detailed study of the moduli variety 
$\M$ when $\N$ is a \textit{Castelnuovo semigroup}, ie. semigroups generated
by consecutive suita\-ble positive integers. In \cite{N16} Pflueger noted that this class 
of semigroups is one he is aware that his bound in \ref{NPbound} does not provide the exact dimension of
$\M$, see \cite[pg. 2]{N16}. 

Summarizing, for any semigroup $\N$ such that $\M\neq\emptyset$ we have

%\begin{thm}[Deligne--Pflueger bounds]

\begin{equation*}\label{2bounds}
3g-2-\mathrm{ewt}(\N)\leq\dim\M\leq 2g-2+\lambda(\N).
\end{equation*}
%\end{thm}

\section{Weierstrass points on Gorenstein curves}
Let $\C$ be a complete integral Gorenstein curve of arithmetic genus $g>1$ defined
over an algebraically closed field $\k$. For each smooth point $P$ of $\C$, let 
$\N$ be the Weierstrass semigroup of $\C$ at $P$.
By the very definition, for each $n\in\N$ 
there is a rational function $x_n$ on $\C$ with pole divisor $nP$. Let us assume 
that the semigroup $\N$ is symmetric, ie. the last gap $\l_g$ is the biggest possible, $l_g=2g-1$.
Equivalently, $n\in\N$ if, and only if, $\l_g-n\notin\N$.
Let $\omega$ be 
the dualizing sheaf of $\C$. A basis for the vector space $H^{0}(\C, \omega)$ is
$\{x_{n_0}, x_{n_1},\ldots, x_{n_{g-1}}\}$, 
and thus $\omega\cong\mathcal{O}_{\C}((2g-2)P)$. By assuming that $\C$ is nonhyperelliptic, 
the canonical morphism
\begin{align*}
(x_{n_0}: x_{n_1}:\ldots: x_{n_{g-1}}):\C\hookrightarrow\mathbb{P}^{g-1}
\end{align*} is an embedding.
Thus $\C$ becomes a curve of degree $2g-2$ in $\mathbb{P}^{g-1}$ and the 
integers $l_i-1$ are the contact orders of the curve with the 
hyperplanes at $P=(0:\ldots:0:1)$. Conversely, any nonhyperelliptic symmetric
semigroup $\N$ can be realized as the Weierstrass semigroup of the Gorenstein
canonical \textit{monomial curve}
\begin{align*}
\C_{\N}:=\{(s^{n_0}t^{\l_g-1}: s^{n_1}t^{\l_{g-1}-1}:\ldots:s^{n_{g-2}}t^{\l_2-1}: s^{n_{g-1}}t^{\l_1-1})\,\vert\,(s:t)\in\mathbb{P}^1\}\subset \mathbb{P}^{g-1}\,
\end{align*}
at its unique point $P$ at the infinity.

Now we recall a construction of a compactification of $\M$ given by Stoehr \cite{St93} and Contiero--Stoehr \cite{c2013}. 
This construction will be required for the next section.

Since $\N$ is symmetric, each nongap $s\in\N, s\leq4g-4$ can be written as a sum of two others nongaps (see \cite[theorem 1.3]{O91}),
\begin{align*}
s=a_s+b_s,\ a_s\leq b_s\leq2g-2.
\end{align*}
By taking $a_s$ as the smallest possible, the $3g-3$ rational functions $x_{a_s}x_{b_s}$ form a $P$-hermitian 
basis of the space of global sections $H^0(\C,\omega^2)$ of the 
bicanonical divisor. If $r\geq3$, then a $P$-hermitian basis of 
the vector space $H^0(\C,\omega^r)$ (cf. \cite[Lemma 2.1]{c2013}) is
\begin{equation*}
\begin{array}{lr}
x_{n_{0}}^{r-1}x_{n_i} & (i=0,\ldots,g-1),\\
x_{n_{0}}^{r-2-i}x_{a_s}x_{b_s}x_{n_{g-1}}^{i} & (i=0,\ldots, r-2,\ s=2g,\ldots,4g-4),\\
x_{n_{0}}^{r-3-i}x_{n_1}x_{2g-n_1}x_{n_{g-2}}x_{n_{g-1}}^{i} & (i=0,\ldots, r-3).
\end{array} 
\end{equation*}
A consequence of the existence of a $P$-hermitian basis of $H^0(\C,\omega^r)$ for any $r\geq 1$, 
is a Max-Noether's theorem, namely the following homomorphism
\begin{equation*}
{\mathbf{k}[X_{n_0},\ldots,X_{n_{g-1}}]}_r\longrightarrow H^{0}(\C, \omega^r)
\end{equation*}
induced by the substitutions $X_{n_{i}}\longmapsto x_{n_i}$ is surjective for 
each $r\geq 1$, where ${\mathbf{k}[X_{n_0},\ldots,X_{n_{g-1}}]}_r$ is the vector space of $r$-forms.
  
Let $I(\C)=\bigoplus_{r=2}^{\infty}I_r(\C)\subset\mathbf{k}[X_{n_0},\ldots,X_{n_{g-1}}]$ 
be the ideal of $\C\subset\proj^{g-1}$. The codimension of $I_r(\C)$ in 
${\mathbf{k}[X_{n_0},\ldots,X_{n_{g-1}}]}_r$ is equal to $(2r-1)(g-1)$, 
in particular $$\dim I_2(\C)=(g-2)(g-3)/2.$$ For $r\geq2$, 
let  $\Lambda_r$ be the vector space in ${\mathbf{k}[X_{n_0},\ldots,X_{n_{g-1}}]}_r$ 
spanned by the lifting of the $P$-hermitian basis of $H^{0}(\C, \omega^r)$. 
Since $\Lambda_r\cap I_r(\C)=0$ and
\begin{align*}
\dim\Lambda_r=\dim H^{0}(\C, \omega^r)=\codim I_r(\C),
\end{align*}
it follows that
\begin{align*}
{\mathbf{k}[X_{n_0},\ldots,X_{n_{g-1}}]}_r=\Lambda_r\oplus I_r(\C),\ r\geq2.
\end{align*}
For each nongap $s\leq4g-4$, let us consider all the partitions of $s$ as sum of 
two nongaps not greater than $2g-2$,
\begin{align*}
s=a_{si}+b_{si},\ \mbox{with}\ a_{si}\leq b_{si},\ (i=1,\ldots, \nu_s), \mbox{ where } a_{s0}:=a_s.
\end{align*}
Hence, given a nongap $s\leq4g-4$ and $i=1,\ldots,\nu_s$  we can write
%$x_{a_{si}}x_{b_{si}}\in H^{0}(\C, sP)$, hence
\begin{align*}
x_{a_{si}}x_{b_{si}}=\displaystyle\sum_{n=0}^{s}c_{sin}x_{a_{n}}x_{b_{n}},
\end{align*}
where $a_n$ and $b_n$ are nongaps of $\N$ whose sum is equal to $s$, and $c_{sin}$ are suitable constants in $\k$.
By normalizing 
the coefficients $c_{sis}=1$, it follows that the 
${g+1\choose 2}-(3g-3)=\frac{(g-2)(g-3)}{2}$ quadratic forms 
\begin{align*}
F_{si}=X_{a_{si}}X_{b_{si}}-X_{a_s}X_{b_s}-\displaystyle\sum_{n=0}^{s-1}c_{sin}X_{a_{n}}X_{b_{n}}
\end{align*} 
vanish identically on the canonical curve $\C$, where the coefficients 
$c_{sin}$ are uniquely determined constants. They are linearly independent, 
hence they form a basis for the space of quadratic relations $I_2(\C)$.
 
It is necessary to make some assumptions on the symmetric semigroup $\N$ to assure 
that the ideal $I(\C)$ is generated by quadratic relations. More 
precisely, we suppose that $\N$ satisfies $3<n_1<g$ and $\N\neq\langle4, 5\rangle$. 
According to \cite[Lemma 3.1]{CF}, both the conditions $n_1\neq3$ and $n_1\neq g$ 
on $\N$ are to avoid possible trigonal Gorenstein curves whose Weierstrass semigroup at $P$ 
equal to $\N=\langle3, g+1\rangle$ and 
$\N=\langle g, g+1,\ldots, 2g-2\rangle$, respectively. This two avoided cases are treated
by Contiero and Fontes in \cite{CF}.
By the assumptions on the semigroup $\N$ it follows by the Enriques--Babbage 
theorem  that  $\C$ is nontrigonal and it is not isomorphic to a plane quintic. 

If $\C$ is smooth, then Petri's theorem \cite{ACGH85} assure that the ideal of $\C$ is 
generated by the quadratic relations. Given a canonical curve $\C$, not necessarily 
smooth, an algorithmic proof that the ideal of $\C$ is generated by the quadratic 
forms $F_{si}$ was done by Contiero and Stoehr in \cite[Theorem 2.5]{c2013}.

On the other hand, for each symmetric semigroup $\N$ with $3<n_1<g$ and $\N\neq\langle4, 5\rangle$,
we can take the following $(g-2)(g-3)/2$ quadratic forms \begin{align*}
F_{si}=X_{a_{si}}X_{b_{si}}-X_{a_s}X_{b_s}-\displaystyle\sum_{n=0}^{s-1}c_{sin}X_{a_{n}}X_{b_{n}},
\end{align*} where $c_{sin}$ are constants to be determined in order that the intersection
$\cap V(F_{si})\subset\proj^{g-1}$ is a canonical Gorenstein curve of genus $g$ whose Weierstrass semigroup at $P$ is 
$\N$.
Analogously, let \begin{align*}
F_{si}^{(0)}:=X_{a_{si}}X_{b_{si}}-X_{a_s}X_{b_s}
\end{align*} be the quadratic forms that generate the ideal of the canonical monomial curve
$\C_{\N}$, cf. \cite[Lemma 2.2]{c2013}. One of the keys to construct a compactification of $\M$ is the following lemma.

\begin{syzlem*}[cf. \cite{c2013}]\label{syzlem}
For each of the $\frac{1}{2}(g-2)(g-5)$ quadratic binomials $\F{s'i'}^{(0)}$
different from 
$\F{n_i+2g-2,1}^{(0)}$ $(i=0,\dots,g-3)$ there is
a syzygy of the form
$$X_{2g-2}\F{s'i'}^{(0)}+\sum_{nsi}\varepsilon_{nsi}^{(s'i')}X_{n}\F{si}^{(0)}=0$$
where the coefficients $\varepsilon_{nsi}^{(s'i')}$ are integers equal to $1$,
$-1$ or $0$, and where the sum is
taken over the
nongaps $n<2g-2$ and the double indexes $si$ with $n+s=2g-2+s'$.
\end{syzlem*}

Let us described briefly the algorithmic construction of a compactification of
$\M$ which was done by Stoehr \cite{St93} and Contiero--Stoehr \cite{c2013}.
First, we replace the binomials $\F{s'i'}^{(0)}$ and $\F{si}^{(0)}$ on the left hand 
side of the Syzygy Lemma by the corresponding quadratic forms $\F{s'i'}$ and
$\F{si}$. Hence we obtain a %for each of the $\frac{1}{2}(g-2)(g-5)$ double
%indices $s'i'$ a
linear combination of cubic monomials of weight $<s'+2g-2$. By virtue of \cite[Lemma 2.4]{c2013} 
this linear combination of cubic monomials admits the following decomposition.
\begin{equation*}\label{Rsi}
 X_{2g-2}\F{s'i'}+\sum_{nsi}\varepsilon_{nsi}^{(s'i')}X_{n}\F{si}=
 \sum_{nsi}\eta_{nsi}^{(s'i')}X_{n}\F{si}+R_{s'i'}
\end{equation*}
where the sum on the right hand side is taken over the nongaps $n\leq 2g-2$ and
the double indexes $si$ with $n+s<s'+2g-2$, where the coefficients
$\eta_{nsi}^{(s'i')}$ are constants, and where $R_{s'i'}$ is a linear
combination of cubic monomials of pairwise different weights $<s'+2g-2$.

For each nongap $m<s'+2g-2$ we denote by $\varrho_{s'i'm}$ the unique coefficient
of $R_{s'i'}$ of weight $m$. Finally, let us consider the following
quasi-homogeneous polynomial in the constants $c_{sin}$,

% Since we do not loose information about the
% coefficients of $R_{s'i'}$ if we replace the variables $X_{n}$ by powers $t^n$
% of an indeterminate $t$, it is convenient to consider the polynomial
\begin{equation*}
R_{s'i'}(t^{n_0},t^{n_1},\dots,t^{n_{g-1}})=
\sideset{}{'}\sum_{m=0}^{s'+2g-3}\varrho_{s'i'm}t^m \ .
\end{equation*}
\noindent
% Since $R_{s'i'}$ may be obtained as a remainder in a division procedure, we can
% arrange that the coefficients $\varrho_{s'i'm}$ (respectively,
% $\eta_{nsi}^{(s'i')})$ are quasi-homogeneous polynomial expressions of weight
% $s'+2g-2-m$ (respectively, $s'+2g-2-n-s$) in the constants $c_{sin}$. 
% However,
% we do not specify a division procedure and we do not postulate that $R_{s'i'}$
% belongs to $\Lambda_3$, and so the $\frac{1}{2}(g-2)(g-5)$ remainders $R_{s'i'}$
% are not uniquely determined. In practice, this freedom in the construction of
% $R_{s'i'}$ allows us to make shortcuts,
% as we will illustrate in the last section.

% where the conditions over the coefficients $c_{sin}$ of these ones 
% are determinate by the Syzygy Lemma in \cite{c2013} as we shall see in 
% the section 3 of this paper. 

Since that the coordinates functions $x_n$, $n\in\N$ 
and $n\leq2g-2$, are not uniquely determined by their pole divisor $nP$
by assuming the characteristic of the field $\k$ to be zero (or a prime not 
dividing any of the differences $m-n$ where $n, m$ are nongaps of $\N$ such 
that $n, m\leq2g-2$), we transform
\begin{align*}
X_{n_i}\longmapsto X_{n_i}+\displaystyle\sum_{j=0}^{i-1}c_{n_in_{i-j}}X_{n_{i-j}},
\end{align*} 
for each $i=1,\ldots, g-1$, and so we can normalize 
$\frac{1}{2}g(g-1)$ of the coefficients $c_{sin}$ to be zero, see 
\cite[Proposition 3.1]{St93}. Due to these normalizations and the 
normalizations of the coefficients $c_{sin}=1$ with $n=s$, the only left to us 
is to transform $x_{n_i}\mapsto c^{n_i}x_{n_i}$ for $i=1,\ldots, g-1$. Summarizing, we get
% and thus
% we have showed an explicit construction of the compactified moduli space 
% $\overline{\mathscr{M}_{g,1}^\N}$:

\begin{thm}\cite[Theorem 2.6]{c2013}\label{teo3}
Let $\N$ be a symmetric semigroup of genus $g$ sa\-tis\-fying $3<n_1<g$ and 
$\N\neq\langle4, 5\rangle$.
The isomorphism classes of the pointed complete integral Gorenstein curves with Weierstrass 
semigroup $\N$ correspond bijectively to the orbits of the $\mathbb{G}_m(\textit{k})$-action
\begin{equation*}
(c,\ldots,c_{sin},\ldots)\longmapsto(\ldots,c^{s-n}c_{sin},\ldots)
\end{equation*}
on the  affine quasi-cone of the vectors whose coordinates are the 
coefficients $c_{sin}$ of the normalized quadratic $F_{si}$ satisfying 
the quasi-homogeneous equations $\varrho_{s'i'm}=0$. 
\end{thm}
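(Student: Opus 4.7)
The plan is to establish both directions of the bijection, building on the algorithmic machinery from \cite{St93, c2013} together with the Syzygy Lemma stated above.

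Starting from a pointed complete integral Gorenstein curve $(\C,P)$ with Weierstrass semigroup $\N$, the Max--Noether surjectivity recalled above, combined with the codimension count $\dim I_2(\C)=(g-2)(g-3)/2$, produces exactly $(g-2)(g-3)/2$ quadratic generators $F_{si}$ of the homogeneous ideal $I(\C)$ in the normalized shape described in the excerpt; the coefficients $c_{sin}$ are uniquely determined by the chosen rational functions $x_n$, since $\Lambda_2\cap I_2(\C)=0$ forbids any linear dependence among the $F_{si}$. Thus to each such curve together with a choice of coordinates we attach a point of affine space with coordinates $(c_{sin})$.

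Next, I would show this point lies on the quasi-cone cut out by $\varrho_{s'i'm}=0$. Picking one of the $\tfrac12(g-2)(g-5)$ binomial syzygies of the Syzygy Lemma and replacing each $F_{si}^{(0)}$ by the corresponding deformed $F_{si}$, one obtains via \cite[Lemma 2.4]{c2013} a decomposition
\[
X_{2g-2}F_{s'i'}+\sum_{nsi}\varepsilon_{nsi}^{(s'i')}X_nF_{si}
=\sum_{nsi}\eta_{nsi}^{(s'i')}X_nF_{si}+R_{s'i'},
\]
so that $R_{s'i'}$ is a combination of cubic products lying in the ideal $(F_{si})=I(\C)$. Evaluating the $X_n$ at the rational functions $x_n$ on $\C$ sends $R_{s'i'}$ to zero, while its summands become rational functions on $\C$ with pairwise distinct pole orders $m<s'+2g-2$ at $P$. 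Such functions are linearly independent in $\k(\C)$, so every coefficient $\varrho_{s'i'm}$ vanishes, placing $(c_{sin})$ in the quasi-cone $\X$.

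For the bijection modulo the $\G_m(\k)$-action, I would track the residual coordinate freedom in the $x_n$. By the discussion preceding the theorem it consists of the triangular affine shifts $x_{n_i}\mapsto x_{n_i}+\sum_{j<i}c_{n_in_{i-j}}x_{n_{i-j}}$, which by \cite[Proposition 3.1]{St93} can be used to normalize exactly $\tfrac12 g(g-1)$ of the $c_{sin}$ to zero, together with the scalings $x_{n_i}\mapsto c^{n_i}x_{n_i}$. The latter preserve every normalization already imposed and transform each remaining $c_{sin}$ by the factor $c^{s-n}$, giving precisely the stated $\G_m$-action. Conversely, given any point of $\X$, the algorithmic construction of \cite[Theorem 2.5]{c2013} uses the vanishing of the $\varrho_{s'i'm}$ to lift all the higher syzygies, showing that the $F_{si}$ cut out a canonical Gorenstein curve of genus $g$ with Weierstrass semigroup $\N$ at the point $(0\!:\!\cdots\!:\!0\!:\!1)$.

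The main obstacle is precisely this converse direction: one must verify that every point of $\X$ corresponds to a bona fide integral Gorenstein curve carrying the prescribed semigroup, rather than to a reducible, non-reduced, or higher-dimensional scheme. Here the vanishing of \emph{all} $\varrho_{s'i'm}$ is essential, as it guarantees that the deformed quadrics share the syzygy module of the monomial quadrics $F_{si}^{(0)}$, hence the same Hilbert function, and therefore cut out a flat family specializing to the canonical monomial curve $\C_{\N}$; integrality, Gorensteinness and the Weierstrass semigroup at $P$ then propagate by the openness of these conditions on $\X$ together with their validity at the monomial fiber.
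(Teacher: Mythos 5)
Your forward direction and the bookkeeping of the residual coordinate changes match the construction the paper recalls from \cite{St93,c2013}: evaluating $R_{s'i'}$ on the curve and using that cubic monomials of pairwise distinct weights restrict to rational functions with pairwise distinct pole orders at $P$ is exactly how the equations $\varrho_{s'i'm}=0$ are forced, and your computation of the induced $\G_m(\k)$-action $c_{sin}\mapsto c^{s-n}c_{sin}$ on the surviving coefficients is correct.

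The gap is in the converse direction, and it is twofold. First, the claim that the vanishing of the $\varrho_{s'i'm}$ makes the deformed quadrics ``share the syzygy module'' of the binomials $F^{(0)}_{si}$, and hence the Hilbert function, is precisely the nontrivial content of \cite[Theorem 2.5]{c2013}: by hypothesis only the quadratic syzygies of the Syzygy Lemma are lifted, and one must still prove, by an induction on the degree (the ``algorithmic proof'' the paper alludes to), that $\mathbf{k}[X_{n_0},\dots,X_{n_{g-1}}]_r=\Lambda_r\oplus (F_{si})_r$ for every $r$; this cannot be waved through. Second, the concluding ``openness'' step fails as stated. Openness of integrality and Gorensteinness of the fibers in a flat family only yields these properties on a neighborhood of the origin of $\X$ (the monomial fiber); to reach every point of $\X$ you must additionally use that the good locus is $\G_m$-stable and that all weights $s-n$ are positive, so the origin lies in the closure of every orbit. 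Worse, ``Weierstrass semigroup at $P$ equal to $\N$'' is not an open condition: the gap sequence is only upper semicontinuous, so nearby fibers of a family through the monomial curve could a priori carry a different semigroup, and no semicontinuity argument transfers the semigroup from the special fiber. In \cite{St93,c2013} this point is handled directly on each fiber, by producing from the coordinate hyperplane sections rational functions with pole divisor exactly $nP$ for each nongap $n\le 2g-2$.
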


% % % % % % % % % % % % % % % % % % %
% % % % % % %
%\section{The dimension of moduli spaces of curves with symmetric Weierstrass semigroup}
\section{Families of symmetric semigroups}

In this section we apply the techniques developed in \cite{c2013} and \cite{St93}
(briefly described in the above section) to deal
with families of symmetric semigroups. We note that if the symmetric semigroup is generated by less
than five elements, the dimension of the moduli variety $\M$ is very known, as we noted in
section 2 of this paper. So, we must
consider symmetric semigroups of multiplicity greater than $5$, just because a symmetric
semigroup of multiplicity $m$ can be generated by $m-1$ elements. The main idea is to
adapt the techniques developed in \cite{c2013} and \cite{St93} to handle with
a projection of the (affine) canonical monomial curve over an (affine) ambient space whose dimension does not depend
on the genus $g$, but only on the multiplicity of the semigroup. Thus we are able to handle with a family
of symmetric semigroups of a given multiplicity. It is clear this approach is closely related to the
equivariant deformation theory developed by Pinkham in \cite{Pi74}.

% In a view what we mentioned in the introduction about the moduli variety induced 
% by symmetric semigroups generated by $4$ elements, in this section by applying the 
% method developed in \cite[pp.587--590]{c2013} we will compute the dimension of $\mathcal{M}_{g,1}^{\N}$ 
% for two families of symmetric semigroups of multiplicity six generated minimally by five elements. 
% We also to recall that a symmetric semigroup of multiplicity $m$ can be generated by $m-1$ elements, 
% to see this we just have to consider the Apery sequence.

\subsection{A family of symmetric semigroups}
 For each positive integer $\tau$ let
 \begin{eqnarray*}
    \N&:=&\langle 6, 3+6\tau, 4+6\tau, 7+6\tau, 8+6\tau\rangle\\
    &=&6\mathbb{N}\sqcup\bigsqcup_{j\in\{3, 4, 7, 8\}}^{}(j+6\tau+6\mathbb{N})\sqcup(11+12\tau+6\mathbb{N}).
    \end{eqnarray*}
    %We will apply the method developed in \cite{c2013}, pg. 587-590, to obtain an upper bound for the dimension of the moduli space $\mathcal{M}_{g,1}^{\N}$. It consists in looking for the quadratic quasi-cone given by quadratic expressions of the equations of $\mathcal{M}_{g,1}^{\N}$, that contain this moduli space. Is much less expensive to obtain the equations and the dimension of the quadratic quasi-cone than the ones of the moduli variety $\mathcal{M}_{g,1}^{\N}$.
\noindent be a semigroup of multiplicity $6$ generated minimally by five elements. Counting the number of gaps of 
$\N$ and picking up the largest nongap, we have
\begin{align*}
g=3+6\tau \ \mbox{and}\ l_g=12\tau+5=2g-1,
\end{align*} 
and so $\N$ is a symmetric semigroup. 

Let $\mathcal{C}$ be a complete integral Gorenstein 
curve and $P$ be a smooth point of $\C$ whose Weierstrass semigroup is $\N$ at $P$. 
For each $n\in\N$, let $x_n$ be  a rational function on $\C$ with pole divisor $nP$. 
We abbreviate
\begin{align*}
x:=x_6\ \mbox{and}\ y_j:=x_{j+6\tau}\ (j=3, 4, 7, 8)
\end{align*}
and normalize 
\begin{align*}
x_{6i}=x^i \ \mbox{and} \ x_{j+6\tau+6i}=x^iy_j, \ \ \forall\, i\geq1.
\end{align*}
A $P$-hermitian 
basis for the vector space $H^{0}(\C, (2g-2)P)$ 
consists of the functions
\begin{equation}\label{fam1base1}
\begin{array}{l}
x^0,\ldots, x^{2\tau},\\
x^0y_j,\ldots, x^{\tau}y_j\ (j=3, 4),\\
x^0y_j,\ldots, x^{\tau-1}y_j\ (j=7, 8).
\end{array}
\end{equation} 
Since the complete integral Gorenstein curve $\C$ is nonhyperelliptic, it can 
be identified with its image under the canonical embedding
\begin{align*}
(x_{n_0}: x_{n_1}:\ldots: x_{n_{g-1}}):\C\hookrightarrow\mathbb{P}^{g-1}.
\end{align*}
By considering the above the normalizations, the projection map 
\begin{align*}
  (1:x: y_3: y_4: y_7: y_8):\C\hookrightarrow\mathbb{P}^{5}
\end{align*}
defines an isomorphism of the canonical curve $\C$ onto a curve $\mathcal{D}\subset\mathbb{P}^{5}$ 
which has degree $8+6\tau$.
Instead of study the ideal of the canonical curve $\C$, which has $(g-2)(g-3)/2$ quadratic generators,
we study the ideal (and the relations between its generators) of the projected curve 
$\mathcal{D}\subset\proj^5$. The advantage is that the number of generators of the ideal of $\mathcal{D}$ 
does not depends on the genus $g$.

Let us consider a $P$-hermitian basis of the vector space $H^{0}(\C, 2(2g-2)P)$ of 
the bicanonical divisor $(4g-4)P=(24\tau+8)P$ which consists of the $3g-3$ functions
\begin{equation}\label{fam1base2}
\begin{array}{l}
x^i\ (i=0, 1,\ldots,4\tau+1 ),\\
x^iy_j\ (i=0, 1,\ldots,3\tau ,\ j=3, 4, 7, 8),\\
x^iy_3y_8\ (i=0, 1,\ldots,2\tau-1 ).
\end{array}
\end{equation}
 % % %\end{align*}
 Let $X, Y_3, Y_4, Y_7, Y_8$ be indeterminate  whose weight we 
 attached $6, 3+6\tau, 4+6\tau, 7+6\tau, 8+6\tau$, respectively. For each $n\in\N$, 
 we define a monomial $Z_n$ of weight $n$ as follows
 \begin{align*}
 Z_{6i}=X^i,\ Z_{j+6\tau+6i}=Y_jX^i\ \mbox{and}\ Z_{11+12\tau+6i}=Y_3Y_8X^i.
 \end{align*}

 %Let us deform the ideal of $\D^{(0)}\cap\mathbb{A}^5$. 
 \noindent By writing the nine products $y_iy_j, (i,j)\neq(3,8)$ as linear combination of the basis 
 elements we obtain polynomials in the indeterminates $X, Y_3, Y_4, Y_7, Y_8$ that vanish 
 identically on the affine curve $\mathcal{D}\cap\mathbb{A}^5$, say 
  \begin{equation}\label{fam1pol}
   \begin{array}{ll}
  F_i=F_i^{(0)}+\sum_{j=0}^{12\tau+i}f_{ij}Z_{12\tau+i-j}& (i=6, 7, 11, 12, 14, 15)\\
   G_i=G_i^{(0)}+\sum_{j=0}^{12\tau+i}g_{ij}Z_{12\tau+i-j}& (i=8, 10, 16),
   \end{array}
   \end{equation}
 where \begin{equation*}
\begin{array}{lll}
   F_6^{(0)}= Y_3^2-X^{2\tau+1}&  F_7^{(0)}= Y_3Y_4-X^{\tau}Y_7&  G_8^{(0)}= Y_4^2-X^{\tau}Y_8,\\
    G_{10}^{(0)}= Y_3Y_7-X^{\tau+1}Y_4& F_{11}^{(0)}=Y_4Y_7-Y_3Y_8& F_{12}^{(0)}= Y_4Y_8-X^{2\tau+2},\\
     F_{14}^{(0)}= Y_7^2-X^{\tau+1}Y_8& F_{15}^{(0)}=Y_7Y_8-X^{\tau+2}Y_3& G_{16}^{(0)}=Y_8^2-X^{\tau+2}Y_4.
\end{array}
\end{equation*}
and the index $j$ only varies through integers with $12\tau+i-j\in\N$. The 
proof of the following lemma is very similar to \cite[Lemma 4.1]{c2013}.  
    
%is a basis of the homogeneous of the canonical monomial curve $\mathcal{D}^{(0)}\cap\mathbb{A}^5$,
  \begin{lem}\label{lem11} The ideal of the affine 
  curve $\mathcal{D}\cap\mathbb{A}^5$ is equal to the ideal $\mathcal{I}$ generated by the 
  forms $F_i \ (i=6, 7, 11, 12, 14, 15)$ and $G_i\ (i=8, 10, 16)$. In particular, if $\C$ is
  the canonical monomial curve $\C_{\N}$, then the ideal of the affine monomial curve
  \begin{align*}
\D_{\N}\cap\mathbb{A}^5=\{(t^6, t^{3+6\tau}, t^{4+6\tau}, t^{7+6\tau}, t^{8+6\tau}):t\in\k\}
\end{align*} 
is generated by the initial forms $F_i^{(0)}\ (i=6, 7, 11, 12, 14, 15)$ and $G_i^{(0)}\ (i=8, 10, 16)$.
   \end{lem}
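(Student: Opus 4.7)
The plan is to follow the strategy of \cite[Lemma 4.1]{c2013}: verify the easy inclusion $\mathcal{I}\subseteq I(\D\cap\AF^5)$ and then prove $I(\D\cap\AF^5)\subseteq\mathcal{I}$ by showing that $R := \k[X,Y_3,Y_4,Y_7,Y_8]/\mathcal{I}$ is free of rank $6$ as a $\k[X]$-module on an explicit basis dictated by the hermitian structure of $\LO(\D\cap\AF^5)$.

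The first inclusion is immediate: the forms $F_i,G_i$ in \eqref{fam1pol} were defined precisely by expanding the nine products $y_iy_j$, $(i,j)\neq(3,8)$, in the $P$-hermitian basis \eqref{fam1base2} of $H^0(\C,\omega^2)$, so they vanish on $\D\cap\AF^5$. For the reverse inclusion, the key observation is that the nine leading (highest weight) terms of the generators of $\mathcal{I}$ are exactly the nine quadratic monomials in $Y_3,Y_4,Y_7,Y_8$ different from $Y_3Y_8$, namely
\[
Y_3^2,\ Y_3Y_4,\ Y_3Y_7,\ Y_4^2,\ Y_4Y_7,\ Y_4Y_8,\ Y_7^2,\ Y_7Y_8,\ Y_8^2.
\]
Using these as rewriting rules, I would argue by induction on the weight that every polynomial in $\k[X,Y_3,Y_4,Y_7,Y_8]$ reduces modulo $\mathcal{I}$ to a $\k[X]$-linear combination of the six \emph{standard monomials} $\{1, Y_3, Y_4, Y_7, Y_8, Y_3Y_8\}$. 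The reduction is well defined because, by the very definition of the monomials $Z_n$ in \eqref{fam1pol}, every correction term $f_{ij}Z_{12\tau+i-j}$ in $F_i$ and $g_{ij}Z_{12\tau+i-j}$ in $G_i$ is already a standard monomial times a power of $X$, and has weight strictly less than the leading term $12\tau + i$.

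To promote this spanning statement to freeness, I would use the parameterization $X\mapsto x$, $Y_j\mapsto y_j$ to compare $R$ with the coordinate ring $\LO(\D\cap\AF^5) = \bigcup_n H^0(\C,\LO_\C(nP))$. The six standard monomials correspond to the six residue classes modulo the multiplicity $6$ among the nongaps of $\N$, and together with all powers of $x$ they exhaust the $P$-hermitian basis \eqref{fam1base1} of every $H^0(\C,\LO_\C(nP))$. Since $\LO(\D\cap\AF^5)$ is torsion-free of generic rank $6$ over the principal ideal domain $\k[x]$, these six elements form a free $\k[x]$-basis; hence the surjection $R \twoheadrightarrow \LO(\D\cap\AF^5)$ is actually an isomorphism and $\mathcal{I}=I(\D\cap\AF^5)$. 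The monomial curve statement follows by specializing the same argument to the binomials $F_i^{(0)}, G_i^{(0)}$, where no correction terms appear and the rewriting reduces to a single substitution per step.

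The main obstacle is the verification of the reduction step in the non-initial setting. After substituting into a monomial that contains a non-standard quadratic factor, the correction terms, once multiplied by the remaining factors of the original monomial, may themselves contain new non-standard monomials, forcing one to iterate. This is handled by a weight induction: each substitution strictly decreases the weight of every non-standard monomial appearing, and only finitely many monomials have any given weight, so the process terminates in the $\k[X]$-span of $\{1, Y_3, Y_4, Y_7, Y_8, Y_3Y_8\}$.
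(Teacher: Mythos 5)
Your proposal follows the paper's own proof essentially step for step: the trivial inclusion $\mathcal{I}\subseteq I(\D\cap\AF^5)$, reduction of an arbitrary polynomial modulo the nine forms to the $\k[X]$-span of $\{1,Y_3,Y_4,Y_7,Y_8,Y_3Y_8\}$ (the paper states this as: no surviving monomial is divisible by a product $Y_iY_j$ with $(i,j)\neq(3,8)$, so the class of $f$ is $\sum c_nZ_n$ with pairwise distinct weights $n$), and finally linear independence of the reduced monomials because the functions $x_n$ have pairwise distinct pole orders at $P$. Your reformulation of that last step as ``the six standard monomials map to a free $\k[x]$-basis of $\mathcal{O}(\D\cap\AF^5)$, hence the surjection is an isomorphism'' is the same argument in $\k[x]$-module language.

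One assertion in your termination argument is literally false and should be repaired: it is not true that ``each substitution strictly decreases the weight of every non-standard monomial appearing.'' The leading replacement $Y_aY_b\mapsto Z_{12\tau+i}$ preserves weight; for instance, rewriting $Y_4Y_7$ inside $Y_4^2Y_7$ via $F_{11}$ produces $Y_3Y_4Y_8$, which is non-standard (it contains the forbidden pairs $Y_3Y_4$ and $Y_4Y_8$) and has exactly the same weight $15+18\tau$. Only the correction terms $f_{ij}Z_{12\tau+i-j}$ with $j\geq 1$ drop in weight, so weight induction alone does not terminate the leading-term chain. The fix is the paper's subordinate induction on the degree in $Y_3,Y_4,Y_7,Y_8$: every leading substitution strictly lowers that degree except $Y_4Y_7\mapsto Y_3Y_8$, which strictly lowers $\#Y_4+\#Y_7$, and any monomial of $Y$-degree at least $3$ (or of degree $2$ other than $X^aY_3Y_8$) still contains a forbidden pair. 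Ordering by $(\text{weight},\ Y\text{-degree},\ \#Y_4+\#Y_7)$ lexicographically makes the rewriting terminate, and the rest of your argument then goes through unchanged.
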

\begin{proof}
It is clear that $\mathcal{I}\subseteq I(\mathcal{D}\cap\mathbb{A}^5)$. Let $f$ be a polynomial 
in the variables $X, Y_3, Y_4, Y_7, Y_8$. By applying induction 
on the degree of $f$ in the indeterminates $Y_3, Y_4, Y_7, Y_8$ we note that, 
module the ideal generated by the 
nine forms $F_i, G_i$, the monomials of this polynomial $f$ are not divisible by the 
nine products $Y_iY_j, (i,j)\neq(3,8)$, hence the class of $f$ is a sum $\sum c_nZ_n$ 
of monomials $Z_n$ of pairwise different weights with $n\in\N$ and $c_n\in\k$. Thus the 
polynomial $f$ belongs to the ideal of the curve $\mathcal{D}\cap\mathbb{A}^5$ if and 
only if the linear combination $\sum c_nZ_n$ vanishes identically on the curve 
$\mathcal{D}\cap\mathbb{A}^5$ and by taking the corresponding linear combination 
$\sum c_nx_n$ of rational functions on $\k(\C)$ we have $c_n=0$ for 
each $n\in\N$, hence $f$ belongs to $\mathcal{I}$.     
\end{proof}

%If $\C$ is equal to the monomial curve $\C^{(0)}\subseteq\mathbb{P}^{g-1}$ parameterized by the semigroup $\N$, then the coefficients $f_{ij}, g_{ij}$ are zero and so the ideal of the curve
%\begin{align*}
%\C^{(0)}\cap\mathbb{A}^5=\{(t^6, t^{3+6\tau}, t^{4+6\tau}, t^{7+6\tau}, t^{8+6\tau}):t\in\k\}
%\end{align*} 
%is generated by the initial quadratic forms $F_i^{(0)}, G_i^{(0)}$. 
% Taking the curve $\C$ is equal to the monomial curve realized by the semigroup 
% $\N$, $\C_{\N}\subseteq\mathbb{P}^{g-1}$, the coefficients $f_{ij}, g_{ij}$ are zero and so 
% the ideal of the affine monomial curve
% 
%  
%      

% For normalize some constants of the $f_{ij}, g_{ij}$, we assume that the characteristic of the 
% field of constants is zero. We observe that the rational functions $x_n$, with $n\leq 2g-2$ 
% nongap, are not uniquely determined by their pole divisor $nP$, instead we have just the 
% follows freedom to transform

Now let us invert the above situation. Taking the fixed above symmetric semigroup $\N$, let
us consider the lifting to the polynomial ring $\k[X,Y_3,Y_4,Y_7,Y_8]$ the basis in \eqref{fam1base1}
and \eqref{fam1base2}. We now introduce nine isobaric polynomials
like in \eqref{fam1pol}. Note that the lifting of the above two basis and the nine isobaric polynomials
just depend on the semigroup $\N$. We look for relations on the coefficients $f_{ij} $ and $g_{ij}$ in
order that this nine polynomials gives rise a Gorenstein curve whose Weierstrass semigroup is $\N$ at 
the marked point. We also note that a $P$-hermitian basis of $H^0(\C,\omega)$ is not uniquely determinate by the pole divisors
$nP$ with $n\in\N$ and $n\leq 2g-2$. In this way, we keep the $P$-hermitian property by transforming
 $$\begin{array}{l}
 x\mapsto x+c_6\\
  y_3\mapsto y_3+\displaystyle\sum_{i=0}^{\tau}c_{3+6\tau}x^{\tau-i}\\
y_4\mapsto y_4+c_1y_3+\displaystyle\sum_{i=0}^{\tau}c_{4+6\tau}x^{\tau-i}\\
 y_7\mapsto y_7+c_3y_4+c_4y_3+\displaystyle\sum_{i=0}^{\tau+1}c_{1+6\tau}x^{\tau+1-i}\\
 y_8\mapsto y_8+c_1'y_7+c_4'y_4+c_5y_3+\displaystyle\sum_{i=0}^{\tau+1}c_{2+6\tau}x^{\tau+1-i},\\
 \end{array}$$ 
where $c_1, c_1', c_3, c_4, c_4', c_5$ and $c_6$ are constants of weight $1, 1, 3, 4, 4, 5$ and $6$, 
respectively. Assuming that the characteristic of $\k$ is different from $2$, and making the lifting of above 
linear change of variables, we can normalize the following coefficients
$$\begin{array}{ll}
f_{7,3+6i}=f_{11,4+6i}=0\ (i=0,\ldots,\tau)& g_{10,1+6i}=f_{11,2+6i}=0\ (i=0,\ldots,\tau+1),\\
\end{array}$$
$$\begin{array}{ll}
f_{12, 1}=f_{14, 3} =f_{15, 4} =g_{16, 5} = 0& g_{8,1}=g_{8,4}=g_{16,6}=0.\\
\end{array}$$
Due the above normalizations and those such that $c_{sir}=1$, the only freedom 
 left us is to transform $x_{n_i}\mapsto c^{n_i}x_{n_i} (i=1,\ldots, g-1)$, where 
 $c\in\k^*=\mathbb{G}_m(\k)$. By virtue of theorem \ref{teo3}, the isomorphism classes of 
 pointed Gorenstein curves $(\C,P)$ determine uniquely the coefficients up to the $\mathbb{G}_m$-action
  \begin{align*}
  g_{ij}\mapsto c^jg_{ij}\ \mbox{e}\ f_{ij}\mapsto c^jf_{ij},
  \end{align*}
 where $c\in\mathbf{k}^{*}$. We attach to coefficients $f_{ij}, g_{ij}$ the weight $j$. 
 Applying the Syzygy Lemma we get only six syzygies of the affine monomial curve $\mathcal{D}_{\N}\cap\mathbb{A}^5$
 $$\begin{array}{l}
 Y_4F_{6}^{(0)}-Y_3F_7^{(0)}+X^{\tau}G_{10}^{(0)}=0 \\
 XY_4F_{7}^{(0)}-Y_7G_{10}^{(0)}+Y_3F_{14}^{(0)}-XY_3G_{8}^{(0)}=0\\
 Y_4F_{11}^{(0)}-Y_7G_8^{(0)}+Y_8F_7^{(0)}=0\\
 Y_4F_{12}^{(0)}-Y_{8}G_8^{(0)}-X^{\tau}G_{16}^{(0)}=0\\
 Y_4F_{14}^{(0)}-Y_{8}G_{10}^{(0)}-Y_{7}F_{11}^{(0)}=0\\
 Y_4F_{15}^{(0)}-Y_8F_{11}^{(0)}-Y_{3}G_{16}^{(0)}=0.\\
 \end{array}$$
 Replacing $F_{i}^{(0)}$ and $G_{i}^{(0)}$ by $F_{i}$ and $G_{i}$
 on the above syzygies and applying the division 
 algorithm to the cubic monomials that do not belong to lifting of the basis of the vector 
 space $H^{0}(\C, 2(2g-2)P)$, the six syzygies of the affine monomial 
 curve $\mathcal{D}_{\N}\cap\mathbb{A}^5$ give rise to the following six syzygies
 module $\Lambda_3$

 \begin{flushleft}
 $\begin{array}{l}
 Y_4F_6-Y_3F_7+X^{\tau}G_{10}\equiv\\
\hspace{1cm}- \displaystyle\sum_{i=0}^{\tau-1}X^{\tau-1-i}\left( f_{6,4+6i}F_{12}+f_{6,5+6i}F_{11}-f_{7,6+6i}G_{10} \right)\\
\hspace{1cm}-\displaystyle\sum_{i=0}^{\tau}X^{\tau-i}\left(f_{6,2+6i}G_8+(f_{6,3+6i}-f_{7,3+6i})F_7-f_{7,4+6i}F_6\right),\\
 \end{array}$

 \medskip
 
$\begin{array}{l} 
 XY_4F_{7}-Y_7G_{10}+Y_3F_{14}-XY_3G_8\equiv\\
\hspace{1cm} +\displaystyle\sum_{i=0}^{\tau}X^{\tau-i}(g_{8, 1+6i}XG_{10}+g_{10, 2+6i}F_{15}+g_{10, 3+6i}F_{14})\\
\hspace{1cm}  +\displaystyle\sum_{i=0}^{\tau}X^{\tau-i}(g_{8, 5+6i}XF_6+(g_{8, 4+6i}-f_{7, 4+6i})XF_7+g_{10, 6+6i}F_{11}),\\
 \hspace{1cm} -\displaystyle\sum_{i=0}^{\tau+1}X^{\tau+1-i}\left(f_{14, 1+6i}G_{10}+f_{14, 4+6i}F_7+f_{14, 5+6i}F_6\right)\\
  \hspace{1cm} -\displaystyle\sum_{i=0}^{\tau-1}X^{\tau-1-i}\left(f_{7, 5+6i}F_{12}+f_{7, 6+6i}F_{11}\right)\\
  \end{array}$
  
  \medskip

  $\begin{array}{l}
 Y_{4}F_{11}-Y_{7}G_8+Y_8F_7\equiv\\
\hspace{1cm}  -\displaystyle\sum_{i=0}^{\tau+1}X^{\tau+1-i}(f_{11,1+6i}G_8+f_{11,2+6i}F_7)\\
 {-}\displaystyle\sum_{i=0}^{\tau}X^{\tau-i}((f_{11,3{+}6i}{+}f_{7,3{+}6i})F_{12}{-}g_{8,1{+}6i}F_{14}
{+}(f_{11,4{+}6i}{-}g_{8,4{+}6i})F_{11}{-}g_{8,5{+}6i}G_{10}), \\
\hspace{1cm}  -\displaystyle\sum_{i=0}^{\tau-1}X^{\tau-1-i}(f_{7,5+6i}G_{16}+(f_{7,6+6i}-g_{8,6+6i})F_{15}) \\ 
 \end{array}$ 
 
   \medskip

 $\begin{array}{l}
Y_4F_{12}-Y_{8}G_8-X^{\tau}G_{16}\equiv \\ 
\hspace{1cm} -\displaystyle\sum_{i=0}^{\tau}X^{\tau-i}\left(-g_{8,1+6i}F_{15}+(f_{12,4+6i}-g_{8,4+6i})F_{12}
+f_{12,5+6i}F_{11}\right) \\
\hspace{1cm} +\displaystyle\sum_{i=0}^{\tau-1}g_{8,6+6i}X^{\tau-1-i}G_{10} -\displaystyle\sum_{i=0}^{\tau+1}X^{\tau+1-i}\left(f_{12,2+6i}G_8+f_{12,3+6i}F_{7}\right), \\
 \end{array}$
 
   \medskip

 $\begin{array}{l}
 Y_4F_{14}-Y_{8}G_{10}-Y_{7}F_{11}\equiv \\
  -\displaystyle\sum_{i=0}^{\tau+1}X^{\tau+1-i}\left(f_{14,1+6i}-f_{11,1+6i})F_{11}-f_{11,2+6i}G_{10}
+f_{14,4+6i}G_{8}+f_{14,5+6i}F_{7} \right)\\
\hspace{1cm}   \displaystyle\sum_{i=0}^{\tau}X^{\tau-i}\left(g_{10,2+6i}G_{16}{+}(g_{10,6{+}6i}{-}f_{14,6{+}6i})F_{12}\right)\\
\hspace{1cm}   \displaystyle\sum_{i=0}^{\tau}X^{\tau-i}\left({+}(g_{10,3+6i}{+}f_{11,3{+}6i})F_{15}{+}f_{11,4{+}6i}F_{14}\right)\\
\end{array}$

  \medskip

$\begin{array}{l}
 Y_4F_{15}-Y_8F_{11}-Y_{3}G_{16}\equiv\\
\hspace{1cm}  {-}\displaystyle\sum_{i=0}^{\tau+1}X^{\tau+1-i}(f_{15,5+6i}G_8{-}g_{16,3+6i}G_{10}{+}f_{15,2+6i}F_{11})\\
\hspace{1cm}   -\displaystyle\sum_{i=0}^{\tau+1}X^{\tau+1-i}\left((f_{15,1+6i}-f_{11,1+6i})F_{12}+\left(f_{15,6+6i}
-g_{16,6+6i}\right)F_7\right)\\
\hspace{1cm}  +\displaystyle\sum_{i=0}^{\tau}X^{\tau-i}\left(f_{11,3+6i}G_{16}+f_{11,4+4i}F_{15} \right){+}\displaystyle\sum_{i=0}^{\tau+2}g_{16,1+6i}X^{\tau+2-i}F_6.\\
 \end{array}$
 \end{flushleft}
 
\noindent For each of the above six syzygies, the right-hand side differs from the corresponding left-hand side 
by a linear combination of basis elements of the vector space $\Lambda_3$ which are a lifting of the basis
elements of $H^{0}(\C, 3(2g-2)P)$. %that vanish identically on the curve 
 %$\mathcal{D}\cap\mathbb{A}^ 5$ and thus this linear combination is identically zero, 
% module $\Lambda_3$. 
The vanishing of the coefficients of the six linear combinations 
provides quasi-homogeneous equations between the coefficients $f_{ij}$ and $g_{ij}$. To 
express these equations in a concise manner we introduce polynomials in only one variable. 
For each $i=6, 7, 11, 12, 14, 15$ let us consider the polynomial
 \begin{align*}
 f_i:=\displaystyle\sum_{r=1}^{12\tau+i}F_i(t^{-6}, t^{-6-3\tau}, t^{-6-4\tau}, 
 t^{-6-7\tau}, t^{-8-3\tau})t^{i+12\tau}\
 \end{align*}
 and we write each one as the sum of its partial polynomials 
 \begin{align*}
 f_i^{(j)}=\displaystyle\sum_{r\equiv j\mod{6}}^{}f_{ir}t^r,\ (j=1,\ldots,6),
 \end{align*}
 which are defined by collecting every terms whose exponents are in the 
 same residue class module $6$. Analogously we define the polynomials $g_j\ (j=8, 10, 16)$ 
 and its partial polynomials $g_i^{(j)}$. 
 
 From our normalizations of the constants 
 $f_{ij}, g_{ij}$, the eight partial polynomials $f_{12}^{(1)}, f_{14}^{(3)}, 
 f_{15}^{(4)}, g_{16}^{(5)}, f_{7}^{(3)}, f_{11}^{(4)}, g_{10}^{(1)}$ and 
 $f_{12}^{(2)}$ are equal to zero, and so we may express each $g_j$ and $f_j$ in terms 
 of the remaining $41$ partial polynomials. %as follows 
 %textbf{RETIREI A ESCRITA DOS $g_j$ and $f_j$ in terms of $41$ partial polynomials} 
   
% \begin{eqnarray*}
 %f_6=f_6^{(2)}+f_6^{(3)}+f_6^{(4)}+f_6^{(5)}+f_6^{(6)},& f_7=f_7^{(1)}+f_7^{(4)}+f_7^{(5)}+f_7^{(6)},\\
 %g_8=g_8^{(1)}+g_8^{(2)}+g_8^{(4)}+g_8^{(5)}+g_8^{(6)},& g_{10}=g_{10}^{(2)}
 %+g_{10}^{(3)}+g_{10}^{(4)}+g_{10}^{(6)},\\
 %f_{15}=f_{15}^{(1)}+f_{15}^{(2)}+f_{15}^{(3)}+f_{15}^{(5)}+f_{15}^{(6)},& f_{11}=f_{11}^{(1)}+f_{11}^{(3)}+f_{11}^{(5)},\\
 %f_{12}=f_{12}^{(2)}+f_{12}^{(3)}+f_{12}^{(4)}+f_{12}^{(5)}+f_{12}^{(6)},&
 %f_{14}=f_{14}^{(1)}+f_{14}^{(2)}+f_{14}^{(4)}+f_{14}^{(5)}+f_{14}^{(6)},\\
 %g_{16}=g_{16}^{(1)}+g_{16}^{(2)}+g_{16}^{(3)}+g_{16}^{(4)}+g_{16}^{(6)}.&
 %\end{eqnarray*}
 
The formal degree of the partial polynomials with $i=j$ and $i-j\equiv6$ 
(namely: $f_6^{(6)}, f_7^{(1)}, g_8^{(2)}, g_{10}^{(4)}, f_{11}^{(5)}, f_{12}^{(6)}$ 
and $f_{12}^{(4)}, f_{15}^{(3)}, g_{16}^{(4)}$) is $i+12\tau$. The partial 
polynomials $f_6^{(4)}, f_6^{(5)}, f_7^{(5)}, f_7^{(6)}, g_8^{(1)}$ and $g_{16}^{(1)}$ 
have formal degree $j+6(\tau-1),$ and $13+6\tau$, respectively. Among the remaining $26$ 
polynomials, $13$ have formal degree $j+6\tau$ and the other ones 
have formal degree $j+6(\tau+1)$. Therefore, the number of the coefficients that 
are still involved is equal to
 \begin{align*}
 (2\tau+1)+5(2\tau+2)+3(2\tau+3)+6\tau+3+13(\tau+1)+13(\tau+2)-3=50\tau+59,
 \end{align*} 
 where the subtraction by three corresponds to the normalizations $g_{8,1}=g_{8,4}=g_{16,6}=0$. 
By virtue of theorem \ref{teo3} we get an explicit construction of a compactification 
 of moduli space $\M$, with $\N:=<6,3+6\tau,4+6\tau, 7+6\tau, 8+6\tau>$ for each $\tau\geq 1$,
 as follows.
 \begin{thm}\label{thm1}
 Let $\N$ be the semigroup generated by $6, 3+6\tau, 4+6\tau, 7+6\tau$ and $8+6\tau$ 
 where $\tau$ is a positive integer. The isomorphism classes of the pointed complete 
 integral Gorenstein curves with Weierstrass semigroup $\N$ correspond bijectively to 
 the orbits of the $\mathbb{G}_m$-action on the quasi-cone of the vectors of length $50\tau+59$ 
 whose coordinates are the coefficients $g_{ij}, f_{ij}$ of the $41$ partial polynomials 
 that satisfy the six equations:
 
 %\begin{flushleft}
 $\begin{array}{lcl}
 f_6-f_7+g_{10}&=&-\f[6,2]g_8-(\f[6,3]-\f[7,3])f_7+\f[7,4]f_6-\f[6,4]f_{12}\\ & &-\f[6,5]f_{11}+\f[7,6]g_{10},\\
%   \end{array}$
  \\
%   $\begin{array}{ll}
 f_7-g_{10}+f_{14}-g_8&=&(\g[8,1]-\f[14,1])g_{10}+\g[10,2]f_{15}+\g[10,3]f_{14}+(\g[10,6]-\f[7,6])f_{11}\\
 & &+(\g[8,4]-\f[7,4]-\f[14,4])f_7+ (\g[8,5]-\f[14,5])f_6-\f[7,5]f_{12},\\
 
 \\
%   \end{array}$
%   $\begin{array}{ll}
f_{11}-g_8+f_7&=& \g[8,5]g_{10}-\f[11,1]g_8-\f[11,2]f_7-\f[7,5]g_{16}+(\g[8,6]-\f[7,6])f_{15}\\
 & &+\g[8,1]f_{14}-(\f[11,3]+\f[7,3])f_{12}-(\f[11,4]-\g[8,4])f_{11}, \\
%  \end{array}$
 
 \\
 
%  $\begin{array}{ll}
 f_{12}-g_8-g_{16}&=& \g[8,1]f_{15}-(\f[12,4]-\g[8,4])f_{12}-\f[12,5]f_{11}+\g[8,6]g_{10}\\ & & -\f[12,2]g_{8}-\f[12,3]f_7, \\
%  \end{array}$
 
 \\
 
%  $\begin{array}{ll}
 f_{14}-g_{10}-f_{11}&=& (\f[11,1]-\f[14,1])f_{11}+\f[11,2]g_{10}-\f[14,4]g_{8}-\f[14,5]f_7+\g[10,2]g_{16}\\
 & &+(\g[10,3]+\f[11,3])f_{15}+\f[11,4]f_{14}-(\f[14,6]-\g[10,6])f_{12}, \\
%  \end{array}$
 
 \\
 
%  $\begin{array}{ll}
 f_{15}-f_{11}-g_{16}&=&(\f[11,1]-\f[15,1])f_{12}-(\f[15,6]-\g[16,6])f_{7}+\g[16,3]g_{10}-\f[15,5]g_{8}\\
 & &-\f[15,2]f_{11}+\g[16,1]f_{6}+\f[11,3]g_{16}+\f[11,4]f_{15}.
 \end{array}$
% \end{flushleft}
 \end{thm}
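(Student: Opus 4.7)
The plan is to execute the Stoehr--Contiero--Stoehr recipe encoded in Theorem \ref{teo3}, but applied to the projected affine curve $\D\cap\AF^5$ rather than the full canonical image in $\proj^{g-1}$. I start with a pointed Gorenstein curve $(\C,P)$ whose Weierstrass semigroup at $P$ is $\N$; by Lemma \ref{lem11} the projection to $\proj^5$ has affine ideal generated by the nine quadratic forms $F_i,G_i$ displayed in \eqref{fam1pol}, and the same lemma ensures that the initial forms $F_i^{(0)},G_i^{(0)}$ generate the ideal of the affine monomial curve $\D_\N\cap\AF^5$. The task is to invert this construction: given coefficients $f_{ij},g_{ij}$ subjected to the normalizations already fixed, determine exactly when the scheme they cut out is a Gorenstein curve with the prescribed Weierstrass semigroup.

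First I assemble the six syzygies of the monomial curve produced by the Syzygy Lemma, namely the identities among $F_i^{(0)},G_i^{(0)}$ displayed just above the statement. Substituting $F_i,G_i$ for $F_i^{(0)},G_i^{(0)}$ yields cubic polynomials that a priori no longer vanish on $\C$, but by \cite[Lemma 2.4]{c2013} each such cubic can be rewritten modulo $(F_i,G_i)$ as a $\k$-linear combination of elements of the lifted $P$-hermitian basis $\Lambda_3$ of $H^0(\C,\omega^3)$. Since the identity must continue to hold on $\C$ while the cubic monomials in $\Lambda_3$ are $\k$-linearly independent, every coefficient of the $\Lambda_3$-expansion must vanish. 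Grouping these scalar equations by residue class modulo $6$ packages them into the six compact polynomial identities among the partial polynomials $\f[i,j],\g[i,j]$ displayed in the theorem.

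Next I verify the parameter count. The eight partial polynomials forced to be identically zero by the normalizations are $\f[12,1],\f[14,3],\f[15,4],\g[16,5],\f[7,3],\f[11,4],\g[10,1]$ and $\f[12,2]$, leaving $41$. Using the formal degrees tabulated just above the theorem (nine polynomials have degree $i+12\tau$; six have degree $j+6(\tau-1)$ or $13+6\tau$; the remaining twenty-six have degrees $j+6\tau$ or $j+6(\tau+1)$), the total number of coefficients adds up to
\[
(2\tau+1)+5(2\tau+2)+3(2\tau+3)+(6\tau+3)+13(\tau+1)+13(\tau+2)-3=50\tau+59,
\]
where the $-3$ accounts for the extra scalar normalizations $g_{8,1}=g_{8,4}=g_{16,6}=0$.

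Finally Theorem \ref{teo3} applies verbatim: after all normalizations the only residual freedom in choosing a $P$-hermitian basis is the diagonal rescaling $x_{n_i}\mapsto c^{n_i}x_{n_i}$ with $c\in\G_m(\k)$, which on coefficients acts by $f_{ij}\mapsto c^j f_{ij}$ and $g_{ij}\mapsto c^j g_{ij}$. Hence the isomorphism classes of pointed Gorenstein curves with Weierstrass semigroup $\N$ correspond bijectively to the $\G_m$-orbits on the quasi-cone inside the $(50\tau+59)$-dimensional affine space cut out by the six equations. The main obstacle, as in the argument of \cite[Lemma 4.1]{c2013}, is the explicit reduction step: substituting the full quadrics into each Syzygy-Lemma identity and carrying the resulting cubics through the division algorithm down to the $\Lambda_3$-basis must produce exactly the right-hand sides displayed in the theorem, and any bookkeeping error there would introduce spurious equations on the parameters and spoil the bijection.
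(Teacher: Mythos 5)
Your proposal follows the paper's own argument essentially verbatim: Lemma \ref{lem11}, the six syzygies from the Syzygy Lemma, reduction of the lifted cubics modulo $\Lambda_3$ via \cite[Lemma 2.4]{c2013}, the coefficient count $50\tau+59$, and the final appeal to Theorem \ref{teo3} for the bijection with $\mathbb{G}_m$-orbits are exactly the steps the authors take. The one piece you leave unverified --- the explicit division-algorithm computation producing the displayed right-hand sides --- is likewise only displayed, not derived, in the paper, so this is not a gap relative to the paper's own level of detail.
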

 Note that the compactified moduli space $\overline{\mathscr{M}_{g,1}^\N}$ can 
 be embedded into a weighted projective space of dimension $50\tau+58$.
Now the key is diminish the dimension of the ambient 
space by projecting this space onto a space of lower dimension. Initially, we take the 
six equations of the moduli space given by the above theorem \ref{thm1} and rewritten this 
equations in terms of $36$ polynomial equations between $41$ partial polynomials. 
Among this equations, there are the following six linear equations between the partial polynomials
\begin{align*}
f_7^{(5)}=f_6^{(5)}, f_{14}^{(5)}=g_8^{(5)}-f_6^{(5)}, g_8^{(4)}=f_7^{(4)}, f_{12}^{(5)}=g_8^{(5)}, f_{14}^{(1)}=f_{11}^{(1)}, g_{16}^{(1)}=f_{15}^{(1)}-f_{11}^{(1)}.
\end{align*}
With this normalizations we diminish the dimension of the ambient space to $44\tau+50$. 
By analyzing the formal degree in the remaining $30$ equations we can eliminate more partial polynomials, 
until the remaining quasi-homogeneous equations do not admit linear terms. However, even with 
this procedure, the solution of the remaining polynomial equations are far from being practicable for every $\tau\geq 1$, 
even with a computer.

% In according to the method of Contiero--Sth\"or \cite[pp.587--590]{c2013}, we 
% will compute the equations of the quadratic quasi-cone 
% $\mathfrak{Q}_\N$ and its dimension. 

We avoid this technical difficulty by considering a much simpler algebraic
space which contains the moduli variety $\M$, which consists of a space
given by only the forms of degree $2$ of the generators of the ideal of the moduli variety $\M$.
First we determine the vector space 
$T_{\k[\N]_{}\k}^{1,-}$ which is, up to an isomorphism, the locus of the 
linearizations of the $36$ equations between the partial polynomials. Indeed the linearizations
consist in substituting the right hand side of the equations in theorem \ref{teo3} by
zeros and solving the linear systems in terms of the partial polynomials.
We can solving this system as follows.
 $$\begin{array}{c}
 \f[7, 1] =\f[15, 1] = 0,\ \f[11, 1] = \g[8, 1],\ \f[14, 1] = \g[8, 1],\ \g[16, 1] = -\g[8, 1]; \\
 \g[8, 2] = 0,\ \g[10, 2] = \f[6, 2],  \ \f[14, 2] = \f[6, 2],\ \f[15, 2] = \f[12, 2], \ 
 \g[16, 2] = \f[12, 2]; \\
 \f[6, 3] =\f[11, 3] =\g[10, 3]=0, \ \f[15, 3] = \f[12, 3], \g[16, 3] = \f[12, 3]; \\
 \g[16, 4] = 0, \ \f[7, 4] =\g[8, 4], \ \g[10, 4] = \f[6, 4]-\g[8, 4], \ \f[12, 4] = \g[8, 4], \ 
 \f[14, 4] = \f[6, 4]-\g[8, 4]; \\
 \f[7, 5] = \f[6, 5], \ \f[14, 5]=\f[11, 5] = -\f[6, 5]+\g[8, 5], \ \f[12, 5] = \g[8, 5], 
 \ \f[15, 5] = -\f[6, 5]+\g[8, 5]; \\
 \f[6, 6] = \f[7, 6]+\g[10, 6],\ \g[8, 6] = \f[7, 6], \ \f[12, 6] = \f[7, 6]+\g[16, 6], 
 \ \f[14, 6] = \g[10, 6], \ \f[15, 6] = \g[16, 6].
 \end{array}$$
 
\noindent Thus we conclude that the vector space $T_{\k[\N]_{}\k}^{1,-}$ can be identified 
 with the space whose entries are the coefficients of the remaining partial polynomials
 \begin{align*}
\g[8,1], \ \f[6,2], \ \f[12,2], \ \f[12,3], \ \g[8,4],\ \f[6,4], \ \f[6,5], \ \g[8,5], 
\ \f[7,6], \ \g[10,6], \ \g[16,6].
 \end{align*}
By counting the coefficients of this partial polynomials we have $11\tau+11$ 
coefficients, and discounting the conditions corresponding to the three normalizations
\begin{align*}
g_{8,1}=g_{8,4}=g_{16,6}=0,
\end{align*} 
we obtain
\begin{align*}
\dim T_{\k[\N]_{}\k}^{1,-}=11\tau+8.
\end{align*}
% We conclude that the compactified moduli space $\overline{\mathscr{M}_{g,1}^\N}$ has 
% been realized as a closed subspace of the $(11\tau+7)$-dimensional weighted projective 
% space $\mathbb{P}(T_{\k[\N]_{}\k}^{1,-})$.
 
  % %As seen in the method developed by A. Contiero and Sth\"or in \cite{c2013}, 
  %we need to calculate the equations for the quadratic quasi-cone $\mathfrak{Q}_\N$ 
  %which will be obtained by making the eliminations in a explicit way. 
  
% % % % % % % % % % % % % % % % % % % %5
% % % % % % % % % % % % % % % % % % % % %

\noindent Now, if we enter with the linearizations into the right hand side of the equations
in theorem \ref{teo3}, then we can solve this equations in terms of the partial polynomials
which appear only in the linearizations. We solve in a way that on the right hand-side
only appear the partial polynomials in the linearizations and the corresponding left
hand-side only the partial polynomials which are not in the linearizations. 
Collecting those equations whose formal degrees of the left hand-sides are less than the corresponding formal
degrees of the right hand-sides, we obtain
% To determine the quadratic quasi-cone $\mathfrak{Q}_\N$ is sufficient to enter with 
% the solutions of the system of $36$ linear equations in the quadratic terms of the 
% $36$ original equations of degree at most $2$ and eliminate the same partial 
% polynomials that the linear case. 
\begin{equation*}
\begin{array}{l}
\f[14, 1] =-\f[6, 4]\f[12, 3]-\f[6, 5]\f[12, 2]+(\g[10,6]-\f[7,6])\g[8, 1]+\g[8, 1]\\
\g[16, 1] =\g[8,1](\f[7, 6]-\g[16,6])+\f[12, 2]\g[8, 5]+\f[12, 3]\g[8, 4]-\g[8, 1]\\
\f[11, 3] =\f[6, 2]\g[8, 1]+\f[6, 4]\g[8, 5]-\f[6, 5]\g[8, 4]\\
\f[14, 4] =\f[6, 2]\f[12, 2]-\f[6, 4](\f[7, 6]-\g[16, 6])-\g[8, 4](\g[10, 6]-\f[7,6])+\f[6, 4]-\g[8, 4]\\
\f[14,5]= \f[6, 2]\f[12, 3]+\f[6, 5](\f[7, 6]-\g[16, 6])+\g[8, 5](\g[10, 6]-\f[7,6])-\f[6, 5]+\g[8, 5].
\end{array}
\end{equation*}

\noindent By comparing the formal degrees of the left with the right hand-sides of the above five equations,
we introduce the following polynomials equations in the coefficients $f_{ij}$ and $g_{ij}$.
%This means that the quadratic quasi-cone $\mathfrak{Q}_\N$ is a subvariety of 
%$T^{1,-}_{\mathbf{k[\N]}|\mathbf{k}}$ whose equations are 
\begin{equation}\label{fam1quasi}
\begin{array}{ll}
\pi_{7+6\tau}(-\f[6, 4]\f[12, 3]-\f[6, 5]\f[12, 2]+\tilde{g}_{10}^{(6)}\g[8, 1])=0\\
\pi_{13+6\tau}(-\g[8,1]\tilde{g}_{16}^{(6)}+\f[12, 2]\g[8, 5]+\f[12, 3]\g[8, 4])=0\\
\pi_{3+6\tau}(\f[6, 2]\g[8, 1]+\f[6, 4]\g[8, 5]-\f[6, 5]\g[8, 4])=0\\
\pi_{10+6\tau}(\f[6, 2]\f[12, 2]+\f[6, 4]\tilde{g}_{16}^{(6)}-\g[8, 4]\tilde{g}_{10}^{(6)})=0\\
\pi_{11+6\tau}( \f[6, 2]\f[12, 3]-\f[6, 5]\tilde{g}_{16}^{(6)}+\g[8, 5]\tilde{g}_{10}^{(6)})=0,
\end{array}
\end{equation}
where $\tilde{g}_{10}^{(6)}=\g[10,6]-\f[7,6], \tilde{g}_{16}^{(6)}=\g[16,6]-\f[7, 6]$
and $\pi_i$ denotes the projection operator in $t$ that annihilates the terms of degree not greater 
than $i$. The above polynomials equations give rise to an affine quadratic quasi-cone 
$\mathcal{Q}\subset\mathbb{A}^{11\tau+8}$, 
which contains the affine quasi-cone $\mathcal{X}\subset\mathbb{A}^{11\tau+8}$, where this last one is such that 
$\M\cong \mathcal{X}/\mathbb{G}_{m}$.
Hence $\dim\mathcal{Q}\geq\dim\mathcal{X}=\dim\M+1$. This is the method
presented in \cite[\& 3]{c2013}.

We note that the congruences in \eqref{fam1quasi} does not depend on the coefficients 
$$f_{6,2},  f_{12,2}, f_{12,3}, g_{8,5},\tilde{g}_{10,6}, \tilde{g}_{16,6}, f_{12,8}, f_{12,9},
\tilde{g}_{16,12} 
\ \mbox{and}\  f_{7,6i}, i=1,\ldots, \tau-1.$$
These congruences depend only on $10\tau$ coefficients. They can be expressed in five equations 
between ten elements of the $\tau$-dimensional artinian algebra
\begin{align*}
A:=k[\epsilon]=\displaystyle\bigoplus_{j=0}^{\tau-1}k\epsilon^{j},\ \mbox{where}\ \epsilon^{\tau}=0.
\end{align*} 
\begin{thm}\label{teo21}
The quadratic quasi-cone $\mathcal{Q}$ is isomorphic to the direct product
\begin{align*}
\mathcal{Q}=M\times N,
\end{align*}
where $M$ is the $(\tau+8)$-dimensional weighted space of weights $2, 2, 3, 5, 6, 6, 8, 9, 12$ 
and $6i, i=1,\ldots,\tau-1$, and $N$ is the quadratic quasi-cone consisting of vectors
\begin{align*}
(\omega_1,\ldots,\omega_{10})=\left(\displaystyle\sum_{j=0}^{\tau-1}\omega_{1j}\epsilon^j,\ldots,
\displaystyle\sum_{j=0}^{\tau-1}\omega_{10,j}\epsilon^j\right),
\end{align*}
such that satisfying the five equations
\begin{equation*}
\begin{array}{ll}
\omega_4\omega_9-\omega_3\omega_7-\omega_2\omega_8=0,\\
\omega_6\omega_7-\omega_4\omega_{10}+\omega_5\omega_8=0,\\
\omega_1\omega_{4}+\omega_2\omega_6-\omega_3\omega_5=0,\\
\omega_1\omega_{7}+\omega_2\omega_{10}-\omega_5\omega_9=0,\\
\omega_6\omega_9-\omega_3\omega_{10}+\omega_1\omega_8=0,\\
\end{array}
\end{equation*}
in the artinian algebra $A$. 
\end{thm}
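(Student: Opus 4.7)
The plan is to exhibit an explicit bijection between the $11\tau+8$ coordinates of $\mathcal{Q}$ and the coordinates of the claimed product $M\times N$, and then verify that the five congruences \eqref{fam1quasi} translate verbatim into the five quadratic equations displayed in $A$. The decomposition has two sources: the coefficients on which the congruences do not depend form the free factor $M$, while the coefficients on which they do depend get organized into ten elements of the truncated artinian algebra $A=\k[\epsilon]/(\epsilon^{\tau})$.

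First, I would read off $M$ directly from the observation made immediately before the statement: the congruences \eqref{fam1quasi} are polynomials in exactly those coefficients of the eleven partial polynomials $\g[8,1],\f[6,2],\f[12,2],\f[12,3],\g[8,4],\f[6,4],\f[6,5],\g[8,5],\f[7,6],\g[10,6],\g[16,6]$ that are not among the listed nine isolated coefficients and not among $f_{7,6i}$ for $i=1,\dots,\tau-1$. The nine isolated coefficients together with the $\tau-1$ coefficients of $\f[7,6]$ contribute a total of $\tau+8$ free parameters, with weights $2,2,3,5,6,6,8,9,12$ and $6,12,\dots,6(\tau-1)$, matching the advertised weighted space $M$.

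Second, for the remaining $10\tau$ coefficients, I would define $\omega_1,\dots,\omega_{10}\in A$ as follows: to each of the ten partial polynomials $g_8^{(1)},f_6^{(2)},f_{12}^{(2)},f_{12}^{(3)},g_8^{(4)},f_6^{(4)},f_6^{(5)},g_8^{(5)},\tilde g_{10}^{(6)},\tilde g_{16}^{(6)}$ (in a chosen order matching the indices of the five target equations), associate the element of $A$ whose coefficient of $\epsilon^{j}$ is the coefficient of the appropriate power of $t$ in that partial polynomial. The indexing is rigged so that multiplication by $t^{6}$ in $\k[t]$ corresponds to multiplication by $\epsilon$ in $A$, and the projection $\pi_{i}$, which kills the top $t$-degrees, corresponds precisely to the vanishing $\epsilon^{\tau}=0$.

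Third, I would verify the five equations by a degree computation. Each congruence in \eqref{fam1quasi} has the shape $\pi_{i}(\omega_a\omega_b+\omega_c\omega_d+\omega_e\omega_f)=0$, where each product of partial polynomials contributes to a single residue class mod $6$ and the threshold $i$ is chosen so that the surviving terms are exactly the $\tau$ degrees parameterized by $\epsilon^{0},\dots,\epsilon^{\tau-1}$. After checking one representative congruence carefully (e.g.\ $\pi_{3+6\tau}(\f[6,2]\g[8,1]+\f[6,4]\g[8,5]-\f[6,5]\g[8,4])=0$ becomes $\omega_{1}\omega_{4}+\omega_{2}\omega_{6}-\omega_{3}\omega_{5}=0$) the remaining four translate by the same mechanism.

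The main obstacle is the third step: the formal degrees of the eleven partial polynomials are not uniform (some have formal degree $j+6\tau$, others $j+6(\tau+1)$, and a handful $j+6(\tau-1)$ or $13+6\tau$), so the precise choice of indexing that makes the truncation $\pi_{i}$ on $\k[t]$ match $\epsilon^{\tau}=0$ on $A$ must be made compatibly for all ten partial polynomials simultaneously. Once this degree bookkeeping is executed, the correspondence $t^{6}\leftrightarrow\epsilon$ is forced, the normalizations $g_{8,1}=g_{8,4}=g_{16,6}=0$ and the identifications $\tilde g_{10}^{(6)}=\g[10,6]-\f[7,6]$, $\tilde g_{16}^{(6)}=\g[16,6]-\f[7,6]$ explain why the leftover coefficients of $\f[7,6]$ slide into $M$, and the five congruences translate into the five equations in $N$ by straightforward multiplication in $A$.
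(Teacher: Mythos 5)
Your proposal follows essentially the same route as the paper: the paper's proof consists precisely of setting $M$ equal to the span of the coefficients on which the congruences \eqref{fam1quasi} do not depend, and defining $\omega_{1j}=f_{6,6\tau+2-6j}$, $\omega_{2j}=f_{6,6\tau-2-6j}$, $\omega_{3j}=f_{6,6\tau-1-6j}$, $\omega_{4j}=g_{8,6\tau-5-6j}$, $\omega_{5j}=g_{8,6\tau-2-6j}$, $\omega_{6j}=g_{8,6\tau+5-6j}$, $\omega_{7j}=f_{12,6\tau+8-6j}$, $\omega_{8j}=f_{12,6\tau+9-6j}$, $\omega_{9j}=\tilde{g}_{10,6\tau+6-6j}$, $\omega_{10,j}=\tilde{g}_{16,6\tau+6-6j}$, after which the translation of the five congruences into the five equations in $A$ is declared to be a direct verification, exactly as in your third step. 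One detail of your bookkeeping needs to be reversed: $\pi_i$ annihilates the terms of degree \emph{not greater than} $i$, so the conditions $\pi_i(\cdot)=0$ constrain the top coefficients, and accordingly the indexing must run downward in $t$-degree (the coefficient of $\epsilon^{j}$ is the coefficient of $t^{d-6j}$ starting from the formal degree $d$, so that division by $t^{6}$, not multiplication, corresponds to multiplication by $\epsilon$ and the discarded low-degree terms land in $\epsilon^{\tau}=0$); with that correction your scheme coincides with the paper's.
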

\begin{proof}
Defining
$$\begin{array}{ll}
\omega_{1j}=f_{6,6\tau+2-6j}& \omega_{2j}=f_{6,6\tau-2-6j},\\
\omega_{3j}=f_{6,6\tau-1-6j}&\omega_{4j}=g_{8,6\tau-5-6j},\\
\omega_{5j}=g_{8,6\tau-2-6j}&\omega_{6j}=g_{8,6\tau+5-6j},\\
\omega_{7j}=f_{12,6\tau+8-6j}&\omega_{8j}=f_{12,6\tau+9-6j},\\
\omega_{9j}=\tilde{g}_{10,6\tau+6-6j}& \omega_{10,j}=\tilde{g}_{16,6\tau+6-6j},\\
\end{array}$$
it is sufficient to observe that the conditions on the $10\tau$ coefficients are 
equivalents to the five quadratic equations in the artinian algebra $A$.
\end{proof}
By applying induction on $\tau$ one can proof that
\begin{cor}\label{cor13}
$\dim\mathcal{Q}=8\tau+8$ and hence
\begin{align*}
\dim\M\leq 8\tau+7.
\end{align*}
\end{cor}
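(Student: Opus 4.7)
The plan is to show $\dim N = 7\tau$; combined with $\dim M = \tau + 8$ from Theorem~\ref{teo21}, this yields $\dim \mathcal{Q} = 8\tau + 8$, and then the bound $\dim\M \leq 8\tau + 7$ follows from $\mathcal{X} \subseteq \mathcal{Q}$, $\M \cong \mathcal{X}/\G_m$, and the fact that the $\G_m$-action has one-dimensional orbits. I will proceed by strong induction on $\tau$.

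For the base case $\tau = 1$, the artinian algebra $A$ degenerates to $\k$ and $N_1 \subset \AF^{10}$ is cut out by the five quadrics. A short combinatorial inspection shows each $\omega_i$ appears in exactly three of the five equations, and each equation is a signed sum of three pair products; after suitable sign adjustments on the $\omega_i$ one recognizes the five quadrics as the $4\times 4$ Pfaffians of a generic $5 \times 5$ skew-symmetric matrix whose ten above-diagonal entries are identified with the $\omega_i$. By the Buchsbaum--Eisenbud structure theorem for Gorenstein ideals of codimension three (already invoked in Section 2), this ideal is Cohen--Macaulay of codimension $3$, so $\dim N_1 = 7$. Moreover $N_1$ is smooth away from the origin, since the only skew-symmetric $5 \times 5$ matrix of rank strictly less than two is the zero matrix.

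For the inductive step, regard $N_\tau$ as the $\tau$-th jet scheme of $N_1$ and consider the truncation morphism $\phi \colon N_\tau \to N_{\tau-1}$ induced by $A \twoheadrightarrow \k[\epsilon]/(\epsilon^{\tau-1})$. For $P \in N_{\tau-1}$ the fiber $\phi^{-1}(P)$ is an affine subspace of $\k^{10}$ defined by five linear equations on the top coefficients $(\omega_{i,\tau-1})$ whose coefficient matrix is the Jacobian $J(\bar P)$ of the Pfaffian relations at the $\k$-reduction $\bar P \in N_1$. Over the dense open subset $S_3 = \{P : \bar P \neq 0\}$ the smoothness of $N_1$ outside the origin gives $\operatorname{rank} J(\bar P) = 3$, so the fiber has dimension $7$, and by induction $\dim \phi^{-1}(S_3) \leq 7(\tau-1) + 7 = 7\tau$. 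Over the exceptional stratum $S_0 = \{P : \bar P = 0\}$ the rescaling $\omega_i = \epsilon u_i$ (possible since $\omega_i \in \epsilon A$) reduces the defining equations to $\epsilon^2 E_l(u) = 0$, which is equivalent to $(u_i \bmod \epsilon^{\tau-3}) \in N_{\tau-3}$ with the top coefficients $u_{i,\tau-3}$ free; this exhibits $S_0$ as an $\AF^{10}$-bundle over $N_{\tau-3}$, so by strong induction $\dim S_0 = 7(\tau-3)+10 = 7\tau - 11$ and $\dim \phi^{-1}(S_0) \leq 7\tau - 1 < 7\tau$ (the small cases $\tau \in \{2,3\}$ are handled by direct computation and satisfy the same bound).

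Combining the two strata yields $\dim N_\tau = 7\tau$, completing the induction and giving $\dim\mathcal{Q} = 8\tau + 8$ and $\dim\M \leq 8\tau + 7$. The main obstacle is controlling the exceptional stratum above the singular origin of $N_1$: there the Jacobian of the Pfaffians vanishes and the fiber dimension of $\phi$ jumps from $7$ up to $10$, so one must show that the base locus $S_0$ shrinks enough to keep the total contribution strictly below $7\tau$. The rescaling $\omega_i = \epsilon u_i$ is what makes this work, by explicitly identifying $S_0$ with a bundle over a lower-depth jet scheme to which the induction hypothesis applies.
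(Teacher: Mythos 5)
Your proof is correct, and in fact it supplies the entire content of an argument the paper only gestures at: the text preceding the corollary says nothing more than ``by applying induction on $\tau$,'' so your write-up is a genuine completion rather than an alternative route. The two essential inputs both check out. First, the sign adjustment you invoke really exists: setting $\omega_1=x_{12}$, $\omega_2=x_{25}$, $\omega_3=x_{24}$, $\omega_4=x_{45}$, $\omega_5=-x_{15}$, $\omega_6=-x_{14}$, $\omega_7=x_{35}$, $\omega_8=-x_{34}$, $\omega_9=x_{23}$, $\omega_{10}=-x_{13}$ turns the five quadrics of Theorem~\ref{teo21} into (up to sign) the five $4\times 4$ Pfaffians of the generic skew-symmetric $5\times 5$ matrix, so $N_1$ is the affine cone over $G(2,5)$ in its Pl\"ucker embedding: irreducible of dimension $7$, codimension $3$, smooth off the vertex. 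Second, the stratification of the truncation $N_\tau\to N_{\tau-1}$ is sound, including the rescaling $\omega_i=\epsilon u_i$ that identifies the stratum over the vertex with an $\AF^{10}$-bundle over $N_{\tau-3}$ and keeps its contribution at $7\tau-1<7\tau$. Two small points to tidy up: (i) the stratification as written only gives the upper bound $\dim N_\tau\le 7\tau$, whereas the corollary asserts the equality $\dim\mathcal{Q}=8\tau+8$; you should add that the jets based at smooth points of $N_1$ form an open subset of $N_\tau$ of dimension exactly $7\tau$ (formal smoothness guarantees the $7$-dimensional fibers over that locus are nonempty), though only the inequality is needed for the conclusion $\dim\M\le 8\tau+7$. (ii) With $A=\k[\epsilon]/(\epsilon^{\tau})$ the scheme $N_\tau$ is the jet scheme of order $\tau-1$, not $\tau$; this is purely an indexing matter and does not affect any of the counts.
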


 \subsection{A second family of symmetric semigroups}

 We apply the same method above for the following particular family
 of symmetric semigroups. For each $\tau\geq 1$, let
 
 \begin{eqnarray*}
\N&=&\langle 6, 7+6\tau, 8+6\tau, 9+6\tau, 10+6\tau\rangle\\
&=&\mathbb{N}\sqcup\bigsqcup_{j\in\{7, 8, 9, 10\}}^{}(j+6\tau+6\mathbb{N})
\sqcup(17+12\tau+6\mathbb{N}),
\end{eqnarray*}
be a symmetric semigroup of genus $g=6+6\tau$. We note if $\tau$ is
equal to zero, $\N$ is also a symmetric semigroup of multiplicity $6$ generated
minimally by five elements. However the ideal of the canonical monomial
curve $\C_{\N}$ can not be generate by only quadratic forms, this special cases
were treated in a recent preprint by Contiero and Fontes in \cite{CF}.
Since $\tau\geq 1$ and the method is the same of the preceding subsection,
we make a lot of shortcuts and we do not explain the method again.

Let $\C$ 
be a complete integral 
Gorenstein curve and $P$ be a nonsingular point of $\C$ whose Weierstrass 
semigroup at $P$ is $\N$. For each $n\in\N$ let $x_n$ be a rational function 
on $\C$ with pole divisor $nP$. Let us consider
\begin{align*}
x:=x_6\ \mbox{and}\ y_j:=x_{j+6\tau}\ (j=7, 8, 9, 10) \ \mbox{with} \ x_{6i}=x^i, x_{j+6\tau+6i}=x^iy_j,\ \  i\geq1. 
\end{align*}
% and normalize 
% \begin{align*}
% x_{6i}=x^i, x_{j+6\tau+6i}=x^iy_j,\ \  i\geq1. 
% \end{align*}
Since $\C$ is a nonhyperelliptic curve, it can be canonically embedded in $\mathbb{P}^{g-1}$, 
and the projection map 
\begin{align*}
(1:x: y_7: y_8: y_9: y_{10}):\C\hookrightarrow\mathbb{P}^{5}
\end{align*}
defines an isomorphism of the canonical curve $\C$ onto a curve 
$\mathcal{D}\subset\mathbb{P}^5$ of degree $6\tau+10$. 
% The next step is to determine the 
% ideal of the curve $\C$ and for this, a $P$-hermitian basis of the 
% vector space of global sections of 
% the bicanonical divisor $(4g-4)P=(24\tau+20)P$ consists of the $3g-3$ functions
A $P$-hermitian basis of the vector space $H^0(\C,(4g-4)P)$ is
\begin{eqnarray*}
&& x^i\ (i=0, 1,\ldots,4\tau+3 ),\\
&& x^iy_j\ (i=0, 1,\ldots,3\tau+2 ,\ j=7, 8),\\
&& x^iy_j\ (i=0, 1,\ldots,3\tau+1 ,\ j=9, 10),\\
&& x^iy_7y_{10}\ (i=0, 1,\ldots,2\tau ).
\end{eqnarray*}

\noindent For a nongap $n\in\N$, the monomial 
$Z_n$ of weight $n$ is
\begin{align*}
      Z_{6i}=X^i,\ Z_{j+6\tau+6i}=Y_jX^i\ \mbox{and}\ Z_{11+12\tau+6i}=Y_7Y_{10}X^i.
      \end{align*}
    
%For the compute the generators of the ideal of $\C$ we will deform the ideal of 
%$\C_{\N}'\cap\mathbb{A}^5$. 
\noindent Writing the nine products $y_iy_j, (i, j)\neq(7, 10)$ 
as linear combination of the basis elements of the $\k$-vector space $H^0(\C, 2(2g-2)P)$ we obtain,
in the indeterminates $X, Y_7, Y_8, Y_9, Y_{10}$, the polynomials
      \begin{eqnarray*}
       F_i=F_i^{(0)}+\sum_{j=0}^{12\tau+i}f_{ij}Z_{12\tau+i-j}& (i=14, 15, 16, 17, 18)\\
        G_i=G_i^{(0)}+\sum_{j=0}^{12\tau+i}g_{ij}Z_{12\tau+i-j}& (i=16, 18, 19, 20),
        \end{eqnarray*}    
that vanish identically on the affine curve $\mathcal{D}\cap\mathbb{A}^5$, where 
\begin{equation*}
 \begin{array}{lll}
F_{14}^{(0)}= Y_7^2-X^{\tau+1}Y_8&  F_{15}^{(0)}= Y_7Y_8-X^{\tau+1}Y_9&  F_{16}^{(0)}=
Y_7Y_9-X^{\tau+1}Y_{10},\\
G_{16}^{(0)}= Y_8^2-X^{\tau+1}Y_{10}& F_{17}^{(0)}=Y_8Y_9-Y_7Y_{10}& F_{18}^{(0)}= 
Y_8Y_{10}-X^{2\tau+3},\\
G_{18}^{(0)}= Y_9^2-X^{2\tau+3}& G_{19}^{(0)}=Y_9Y_{10}-X^{\tau+2}Y_7& G_{20}^{(0)}=
Y_{10}^2-X^{\tau+2}Y_8.
\end{array}
\end{equation*}

  \begin{lem}\label{lem21} The ideal of the affine 
  curve $\mathcal{D}\cap\mathbb{A}^5$ is equal to the ideal $\mathcal{I}$ generated by the above
  forms $F_i$ and $G_i$. In particular, if $\C$ is
  the canonical monomial curve $\C_{\N}$, then the ideal of the affine monomial curve
  \begin{align*}
\D_{\N}\cap\mathbb{A}^5=\{(t^6, t^{7+6\tau}, t^{8+6\tau}, t^{9+6\tau}, t^{10+6\tau}):t\in\k\}
\end{align*} 
is generated by the initial forms $F_i^{(0)}$ and $G_i^{(0)}$.
   \end{lem}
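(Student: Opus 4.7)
The plan is to mimic verbatim the proof of Lemma \ref{lem11} (which in turn follows \cite[Lemma 4.1]{c2013}), since the combinatorial situation is the same: nine forms $F_i, G_i$ whose initial binomials reduce every quadratic monomial in the $Y_j$'s, except for the single ``resistant'' product $Y_7Y_{10}$, which plays the role that $Y_3Y_8$ played in the previous family. Accordingly the nongaps $n\in\N$ with $n\leq 2g-2$ still correspond bijectively to the monomials
\begin{align*}
Z_{6i}=X^{i},\quad Z_{j+6\tau+6i}=Y_jX^{i}\ (j=7,8,9,10),\quad Z_{17+12\tau+6i}=Y_7Y_{10}X^{i}.
\end{align*}

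First I would observe the trivial inclusion $\mathcal{I}\subseteq I(\mathcal{D}\cap\mathbb{A}^{5})$, which holds by construction: each $F_i$ and $G_i$ was obtained by writing a quadratic product $y_iy_j$ (with $(i,j)\neq (7,10)$) as a $\k$-linear combination of the chosen $P$-hermitian basis of $H^{0}(\C,2(2g-2)P)$. For the reverse inclusion, pick $f\in I(\mathcal{D}\cap\mathbb{A}^{5})\subset \k[X,Y_{7},Y_{8},Y_{9},Y_{10}]$ and argue by induction on the total degree of $f$ in the four variables $Y_{7},\dots,Y_{10}$. Every monomial of positive $Y$-degree containing a factor $Y_iY_j$ with $(i,j)\neq(7,10)$ can, modulo $\mathcal{I}$, be replaced using the appropriate leading form from the list $F_{14}^{(0)},F_{15}^{(0)},F_{16}^{(0)},G_{16}^{(0)},F_{17}^{(0)},F_{18}^{(0)},G_{18}^{(0)},G_{19}^{(0)},G_{20}^{(0)}$ by either (i) $X^{\tau+1}Y_k$, (ii) $X^{\tau+2}Y_k$, (iii) a power of $X$, or (iv) $Y_7Y_{10}$ (possibly multiplied by a power of $X$); in each case the $Y$-degree strictly drops or the monomial already belongs to the list $\{Z_n\}$. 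Iterating, $f$ is congruent modulo $\mathcal{I}$ to a finite sum $\sum_{n} c_n Z_n$ whose summands are monomials of pairwise distinct weights $n\in\N$.

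Finally I would transfer back to $\C$: evaluating the residual sum at $(x,y_7,y_8,y_9,y_{10})$ yields the identity $\sum_n c_n x_n = 0$ in $\k(\C)$. Because the rational functions $x_n$ have pairwise distinct pole orders at $P$ (precisely the nongaps $n\in\N$), they are $\k$-linearly independent, so $c_n=0$ for all $n$, proving $f\in \mathcal{I}$. The ``in particular'' assertion for the canonical monomial curve $\C_{\N}$ is then immediate: the normalizing coefficients $f_{ij}, g_{ij}$ all vanish in that case, so $\mathcal{I}$ is generated by the initial binomials $F_i^{(0)}, G_i^{(0)}$, and the bijective substitution $X\mapsto t^{6}$, $Y_j\mapsto t^{j+6\tau}$ verifies that these binomials vanish identically on $\D_{\N}\cap\mathbb{A}^{5}$.

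The only genuine obstacle is the verification in the inductive step that the nine initial forms exhaust all quadratic $Y$-monomials except $Y_7Y_{10}$ and always reduce them to expressions still expressible in the normal form $\sum c_n Z_n$. This amounts to the finite tabulation already displayed in the statement of the lemma, so no new ingredient beyond the analogue of \cite[Lemma 4.1]{c2013} is needed; the rest of the argument is purely formal and parallels the proof of Lemma \ref{lem11} word for word.
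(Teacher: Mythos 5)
Your proposal is correct and follows essentially the same route as the paper, which proves the analogous Lemma~\ref{lem11} by exactly this induction on the $Y$-degree plus linear independence of the $x_n$ via their distinct pole orders, and leaves Lemma~\ref{lem21} to the same argument. You even silently correct the paper's typo by writing $Z_{17+12\tau+6i}=Y_7Y_{10}X^{i}$ (the weight of $y_7y_{10}$ is $17+12\tau$, not $11+12\tau$ as printed).
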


% Following the proof of lemma \eqref{lem11} we have that the nine forms 
% $F_i$ and $G_i$ generate the ideal of the canonical curve $\C$. 
% %is generated by the quadratic forms, \cite[see Lemma 2.2]{c2013},
% %The proof of this statement follows in a similar way to the lemma \eqref{lem11}.
% %which generate the ideal of the affine curve $\C'\cap\mathbb{A}^5$,  
% If $\C$ is the affine monomial curve
%  \begin{align*}
% \C'_{\N}\cap\mathbb{A}^5=\{(t^6, t^{7+6\tau}, t^{8+6\tau}, t^{9+6\tau}, t^{10+6\tau}):t\in\k\},
% \end{align*} 
% then all the coefficients $f_{ij}, g_{ij}$ are equals to zero and so
% the ideal of $\C_{\N}$ is generated by the nine forms $F_{i}^{(0)}, G_{i}^{(0)}$.

Inverting the above situation and considering the polynomials $F_i$ and $G_{i}$ just induced by
the semigroup $\N$, we normalize the coefficients
$$\begin{array}{ll}
f_{18,1}=g_{18,1}=g_{19,2}=g_{20,3}=0,& f_{15,6}=f_{16,2}=g_{16,1}=0\\
\end{array}$$
and 
$$\begin{array}{ll}
    f_{16,1+6i}=f_{17,4+6i}=f_{18,2+6i}=g_{19,3+6i}=0,&\quad (i=0,...,\tau +1).\\
\end{array}$$
     % % % % % % % % % % % % % % % % % % % % % % % % % % %
     % % % % % % % % % % % % % % % % % % % % % % % % % % % %
% After these normalizations and the coefficients such that $c_{sir}=1$ follows from 
% theorem \ref{teo3} that the isomorphism class of the pointed Gorenstein curve 
% $(\C, P)$ determines uniquely the coefficients up to the following $\mathbb{G}_m(\k)$-action
% \begin{align*}
% g_{ij}\mapsto c^jg_{ij}\ \mbox{e}\ f_{ij}\mapsto c^jf_{ij},
% \end{align*}
% where $c\in\mathbf{k}^{*}$. 
By applying the Syzygy Lemma we get 
$$ \begin{array}{l}
Y_{10}F_{14}^{(0)}-Y_8F_{16}^{(0)}+Y_7F_{17}^{(0)}=0,\\
Y_{10}F_{15}^{(0)}-Y_9G_{16}^{(0)}+Y_8F_{17}^{(0)}=0,\\
Y_{10}G_{16}^{(0)}-Y_{8}F_{18}^{(0)}-X^{\tau +1}G_{20}=0,\\
Y_{10}F_{17}^{(0)}-Y_8G_{19}^{(0)}+Y_7G_{20}^{(0)}=0,\\
Y_{10}F_{18}^{(0)}-X^{\tau +2}G_{16}^{(0)}-Y_8G_{20}^{(0)}=0,\\
Y_{10}G_{18}^{(0)}-X^{\tau +2}F_{16}^{(0)}-Y_9F_{19}^{(0)}=0,\\
Y_{10}G_{19}^{(0)}-X^{\tau +2}F_{17}^{(0)}-Y_9G_{20}^{(0)}=0.\\
      \end{array}$$
      
% If erase the exponents of the forms $F_{si}^{(0)}$ and $G_{si}^{(0)}$ on 
% the above syzygies then we have seven equations of linear combinations of cubic 
% monomials where some these ones are not in the basis of the vector space 
% $H^0(\C, (4g-4)P)$. Then by 
% Applying the division algorithm to the cubic 
% monomials that are not in the basis of $H^0(\C, (4g-4)P)$, we obtain the seven 
% polynomials equations:
\noindent Hence we obtain seven polynomial equations module $\Lambda_3$
      % Syzygies of the affine curve $\C'\cap\mathbb{A}^5$:
      \begin{flushleft}
      $\begin{array}{l}
      Y_{10}F_{14}-Y_8F_{16}+Y_7F_{17}\equiv\\
 \hspace{1cm}     -\displaystyle\sum_{i=0}^{\tau +1}X^{\tau +1-i}[(f_{17, 3+6i}-f_{16,3+6i})F_{15}+
      f_{17,2+6i}F_{16}-f_{16, 2+6i}G_{16}]\\
 \hspace{1cm}     -\displaystyle\sum_{i=0}^{\tau }X^{\tau -i}[(f_{14,6+6i}-f_{16,6+6i})F_{18}
      +f_{14, 5+6i}G_{19}+f_{14, 4+6i}G_{20}],
      \end{array}$
      
\medskip      
      
      $\begin{array}{l}
      Y_{10}F_{15}-Y_9G_{16}+Y_8F_{17}\equiv\\
\hspace{1cm}      \displaystyle\sum_{i=0}^{\tau }X^{\tau -i}
      [(g_{16,6+6i}-f_{15,6+6i})G_{19}-f_{15, 5+6i}G_{20}]\\
\hspace{1cm}      +\displaystyle\sum_{i=0}^{\tau +1}X^{\tau +1-i}[g_{16,3+6i}F_{16}-f_{17, 3+6i}G_{16}-
      (f_{17,2+6i}-g_{16, 2+6i})F_{17}\\
\hspace{1cm}      -(f_{15,1+6i}+f_{17,1+6i})F_{18}+g_{16,1+6i}G_{18}],
      \end{array}$
      
\medskip

      $\begin{array}{l}
      Y_{10}G_{16}-Y_8F_{18}+X^{\tau +1}G_{20}\equiv
      -\displaystyle\sum_{i=0}^{\tau }X^{\tau -i}
      g_{16,6+6i}G_{20}+\\ 
     \displaystyle\sum_{i=0}^{\tau +1}X^{\tau +1-i}[f_{18, 5+6i}F_{15}+f_{18, 4+6i}G_{16}+
      f_{18,3+6i}F_{17}
      -g_{16,2+6i}F_{18}-g_{16,1+6i}G_{19}], %\\-f_{18,1}Y_7Y_8Y_{10}
      \end{array}$
      
      %$$\begin{array}{l}
          
\medskip          
      
      $\begin{array}{l}
      Y_{10}F_{17}-Y_8G_{19}+Y_7G_{20}\equiv\\
     +\displaystyle\sum_{i=0}^{\tau +1}X^{\tau +1-i}[(g_{19, 6+6i}-g_{20,6+6i})F_{15}-g_{20,5+6i}F_{16}
      +g_{19,5+6i}G_{16}+g_{19, 4+6i}F_{17}\\
\hspace{1cm}      -f_{17,3+6i}F_{18}-f_{17,2+6i}G_{19}-f_{17,1+6i}G_{20}]-
      \displaystyle\sum_{i=0}^{\tau +2}X^{\tau +2-i}g_{20, 1+6i}F_{14},
      %+g_{20,3})Y_7^2Y_{10}-g_{19,2}Y_7Y_8Y_{10}
      \end{array}$

      $\begin{array}{l}
      Y_{10}F_{18}-X^{\tau+2}G_{16}-Y_8G_{20}\equiv
      \displaystyle\sum_{i=0}^{\tau +2}X^{\tau +2-i}g_{20,1+6i}F_{15}\\
    {+}\displaystyle\sum_{i=0}^{\tau +1}X^{\tau +1-i}[g_{20,6+6i}G_{16}{+}g_{20, 5+6i}F_{17}{-}(f_{18,4+6i}{-}g_{20,4+6i})F_{18}{-}f_{18,3+6i}G_{19}],\\
       %+f_{18,1}Y_7Y_{10}^2-g_{20,3}Y_7Y_8Y_{10}
      \end{array}$
      
\medskip           
      
      $\begin{array}{l}
      Y_{10}G_{18}-X^{\tau+2}F_{16}-Y_9G_{19}\equiv
      \displaystyle\sum_{i=0}^{\tau +1}X^{\tau +1-i}[g_{19, 6+6i}F_{16}+g_{19,5+6i}F_{17}\\
\hspace{1cm}      +g_{19, 4+6i}G_{18}-g_{18,4+6i}F_{18}-g_{18,3+6i}G_{19}-g_{18, 2+6i}G_{20}], %\\+g_{18,1}Y_7Y_{10}^2-g_{19,2}Y_7Y_9Y_{10}
      \end{array}$
      
\medskip      
      
      $\begin{array}{l}
      Y_{10}G_{19}-X^{\tau+2}F_{17}-Y_9G_{20}\equiv
      \displaystyle\sum_{i=0}^{\tau +2}X^{\tau +2-i}g_{20,1+6i}F_{16}\\
      {+}\displaystyle\sum_{i=0}^{\tau +1}X^{\tau +1-i}[g_{20,6+6i}F_{17}{+}g_{20, 5+6i}G_{18}{-}g_{19,5+6i}F_{18}{-}(g_{19,4+6i}{-}g_{20,4+6i})G_{19}].\\
      
      %+g_{19,2}Y_7Y_{10}^2-g_{20,3}Y_7Y_9Y_{10}
      \end{array}$
      \end{flushleft}
% In this equations the right-hand side differs from the corresponding left hand-side by a 
% linear combination of elements of $\Lambda_3$ which vanishes identically on the curve 
% $\C'\cap\mathbb{A}^5$ and so is identically zero. Adopting the notation of partial 
% polynomials defined as in the first subsection the vanishing of these seven equations 
% gives quasi-homogeneous relations between of the coefficients $f_{ij}, g_{ij}$ which 
% provide the explicit construction of the compactified moduli space 
% $\overline{\mathscr{M}_{g,1}^\N}$. Now by theorem \eqref{teo3}  
\begin{thm}\label{thm11}
Let $\N$ be the semigroup generated by $6, 7+6\tau, 8+6\tau, 9+6\tau$ and $10+6\tau$ where
$\tau$ is a positive integer. The isomorphism classes of the pointed complete integral 
Gorenstein curves with Weierstrass semigroup $\N$ correspond bijectively  to the orbits 
of the $\mathbb{G}_m$-action on the quasi-cone of the vectors of length $50\tau+84$ whose 
coordinates are the coefficients $g_{ij}, f_{ij}$ of the $41$ partial polynomials that 
satisfy the seven equations:
\begin{flushleft}

$\begin{array}{l}     
f_{18}-g_{16}-g_{20}=g_{20}^{(6)}g_{16}+g_{20}^{(5)}f_{17}-(f_{18}^{(4)}-g_{20}^{(4)})f_{18}
-f_{18}^{(3)}g_{19}-f_{18}^{(2)}g_{20}+g_{20}^{(1)}f_{15},\\
 \end{array}$
 
\medskip
 
 $\begin{array}{l}     
g_{18}-f_{16}-g_{19}=g_{19}^{(6)}f_{16}+g_{19}^{(5)}f_{17}+g_{19}^{(4)}g_{18}
-g_{18}^{(4)}f_{18}-g_{18}^{(3)}g_{19}-g_{18}^{(2)}g_{20},\\
 \end{array}$

\medskip 
 
$\begin{array}{l}
g_{19}-f_{17}-g_{20}=g_{20}^{(6)}f_{17}+g_{20}^{(5)}g_{18}-g_{19}^{(5)}f_{18}
-(g_{19}^{(4)}-g_{20}^{(4)})g_{19}+g_{20}^{(1)}f_{16},
\end{array}$

\medskip

$\begin{array}{ll}   
g_{16}-f_{18}+g_{20}=& f_{18}^{(5)}f_{15}+f_{18}^{(4)}g_{16}+f_{18}^{(3)}f_{17}
-g_{16}^{(2)}f_{18}-g_{16}^{(1)}g_{19}-g_{16}^{(6)}g_{20},\\
\end{array}$ 

\medskip

$\begin{array}{ll}
f_{14}-f_{16}+f_{17}=& f_{16}^{(2)}g_{16}-(f_{17}^{(3)}-f_{16}^{(3)})f_{15}-f_{17}^{(2)}f_{16}-
(f_{14}^{(6)}-f_{16}^{(6)})f_{18}\\
&-f_{14}^{(5)}g_{19}-f_{14}^{(4)}g_{20},
 \end{array}$
 
\medskip 
 
 $\begin{array}{ll}   
f_{15}-g_{16}+f_{17}= &g_{16}^{(1)}g_{18}-(f_{15}^{(1)}+f_{17}^{(1)})f_{18}+g_{16}^{(3)}f_{16}
-f_{17}^{(3)}g_{16}-f_{15}^{(5)}g_{20}\\
&-(f_{17}^{(2)}-g_{16}^{(2)})f_{17}-(f_{15}^{(6)}-g_{16}^{(6)})g_{19},\\
 \end{array}$
 
\medskip
 
$\begin{array}{ll}
f_{17}-g_{19}+g_{20}=&g_{19}^{(4)}f_{17}+g_{19}^{(5)}g_{16}-g_{20}^{(5)}f_{16}
-(g_{20}^{(6)}-g_{19}^{(6)})f_{15}\\
&-f_{17}^{(3)}f_{18}-f_{17}^{(2)}g_{19}-f_{17}^{(1)}g_{20}-g_{20}^{(1)}f_{14}.\\
 \end{array}$

\end{flushleft}
      \end{thm}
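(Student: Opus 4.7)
The plan is to mirror the proof of Theorem~\ref{thm1} essentially verbatim, replacing the nine generators $F_i, G_i$ and six syzygies from the first family by the nine generators listed in Lemma~\ref{lem21} and the seven syzygies already displayed in the text before the statement. The main geometric input is Lemma~\ref{lem21}, which identifies the ideal of the projected affine curve $\D\cap\mathbb{A}^5$ with the ideal generated by the $F_i$ and $G_i$. I would then exhaust the freedom of linear changes of variables that preserve the $P$-hermitian property of a basis of $H^0(\C,\omega)$: namely, transformations of the form $x\mapsto x+c_6$, and $y_j\mapsto y_j+(\text{lower weight corrections})$ for $j=7,8,9,10$, with constants whose weights are chosen precisely so that the coefficients
\[
f_{18,1}=g_{18,1}=g_{19,2}=g_{20,3}=f_{15,6}=f_{16,2}=g_{16,1}=0
\]
and $f_{16,1+6i}=f_{17,4+6i}=f_{18,2+6i}=g_{19,3+6i}=0$ for $i=0,\ldots,\tau+1$ can be imposed, leaving only the residual $\mathbb{G}_m$-action $c\cdot x_{n_i}=c^{n_i}x_{n_i}$ of Theorem~\ref{teo3}.

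Next I would promote the seven binomial syzygies of the monomial curve $\D_{\N}\cap\mathbb{A}^5$ that were written down after Lemma~\ref{lem21} to syzygies of the full $F_i, G_i$, by substituting $F_i^{(0)}, G_i^{(0)}$ by $F_i, G_i$. The resulting cubic expressions are no longer zero; using the analogue of \cite[Lemma 2.4]{c2013} one rewrites each of them as a linear combination $\sum \eta_{nsi}X_n F_{si}+R$ plus a remainder $R$ which is a linear combination of $\Lambda_3$-lifts of basis elements of $H^{0}(\C,3\omega)$. This is exactly the content of the seven explicit congruences displayed modulo $\Lambda_3$ before the theorem. Since the corresponding cubic forms vanish on $\C$, the $\Lambda_3$-coefficients $R$ must vanish identically; translating this via the parametrization $X\mapsto t^{-6}$, $Y_j\mapsto t^{-(j+6\tau)}$ and assembling the partial polynomials $f_i^{(j)}, g_i^{(j)}$ by residue class mod $6$ yields precisely the seven quasi-homogeneous equations in the theorem statement.

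To finish I would verify the parameter count. Starting from the $41$ partial polynomials of the nine generators, the normalizations above eliminate exactly the right number of coefficients in each residue class to bring the total dimension of the affine ambient space to $50\tau+84$. Invoking Theorem~\ref{teo3} then identifies the isomorphism classes of pointed Gorenstein curves $(\C,P)$ with Weierstrass semigroup $\N$ with the $\mathbb{G}_m$-orbits on the quasi-cone cut out by the seven displayed equations inside $\mathbb{A}^{50\tau+84}$. The direction ``curve produces a point of the quasi-cone'' is clear from the construction; the converse requires checking that any collection of coefficients satisfying the seven equations reassembles into nine polynomials whose ideal defines an integral canonical Gorenstein curve, which follows from the syzygy completeness argument of Contiero--Stoehr applied to our context (the binomials $F_i^{(0)}, G_i^{(0)}$ admit no further independent syzygies of the relevant weight).

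The main obstacle is the bookkeeping in the second step: one must be sure that the seven displayed syzygies are exhaustive, in the sense that every additional syzygy of the monomial binomials is a polynomial consequence of these seven, so that no further polynomial equation in the $f_{ij}, g_{ij}$ is missed. As in the first family this can be checked degree by degree using the explicit graded resolution of the monomial ideal, the symmetry of $\N$ guaranteeing that the ``missing'' syzygy $Y_{10}F_{16}^{(0)}-(\ldots)$ corresponds to the excluded row in the Syzygy Lemma and therefore does not contribute a new equation.
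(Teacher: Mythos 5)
Your proposal is correct and follows essentially the same route as the paper, which itself gives no separate proof of Theorem~\ref{thm11} but relies, exactly as you do, on Lemma~\ref{lem21}, the normalizations by weight-preserving coordinate changes, the seven lifted syzygies reduced modulo $\Lambda_3$, and the bijection of Theorem~\ref{teo3}. The only place you go beyond the paper is in flagging the exhaustiveness of the seven syzygies and the converse direction, both of which are indeed covered by the Contiero--Stoehr argument you cite.
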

% As consequence of the above theorem, the moduli space $\overline{\mathscr{M}_{g,1}^\N}$ is into 
% a $50\tau+84$-dimensional weighted projective space. Since the vector space
% $T_{\k[\N]_{}\k}^{1,-}$ corresponds bijectively to the locus of the linearization of 
% the $42$ equations of the linear system obtained by replace the quadratic terms on 
% the right sides of the equations of the theorem \eqref{thm11} by zeros, 
% when we solving this system follows that the vector space 
% $T_{\k[\N]_{}\k}^{1,-}$ can be identified with the space whose entries are the 
% coefficients of the remaining partial polynomials

The linearizations depend only on the $11$ following partial polynomials

$$f_{14}^{(1)}, f_{17}^{(2)}, 
f_{16}^{(2)}, g_{16}^{(3)}, f_{14}^{(4)}, g_{20}^{(4)}, f_{14}^{(5)}, f_{18}^{(5)}, 
f_{14}^{(6)}, f_{15}^{(6)} \mbox{ and }g_{20}^{(6)}.$$

\noindent Counting its coefficients and discounting the three 
normalizations $f_{15,6}=$ $f_{16,2}=$ $g_{16,1}=0$, we obtain $11\tau+15$ coefficients. Thus
    \begin{align*} 
    \dim T_{\k[\N]_{}\k}^{1,-}=11\tau+15.
    \end{align*}

% Thus the compactified moduli space $\overline{\mathscr{M}_{g,1}^\N}$ has been realized 
% as a closed subspace of the $(11\tau+14)$-dimensional weighted projective space 
% $\mathbb{P}(T_{\k[\N]_{}\k}^{1,-})$. 

\noindent The equations of the affine quadratic quasicone $\mathcal{Q}$ are given by:

% Now we compute the quadratic quasi-cone and its dimension. Entering with our 
% solution of the system of $42$ linear equations in the quadratic terms of degree 
% at most two and eliminate the same partial polynomials that the linear case, 
% the quadratic quasi-cone $\mathfrak{Q}_\N$ is a subvariety of $T_{\k[\N]_{}\k}^{1,-}$ 
% whose equations are
     \begin{equation}\label{thmfinal}
\begin{array}{l}
      \pi_{13+6\tau}(f_{14}^{(1)}\tilde{g}_{20}^{(6)}+f_{14}^{(4)}g_{16}^{(3)}+
      f_{14}^{(5)}f_{17}^{(2)}-f_{17}^{(2)}f_{18}^{(5)}+g_{16}^{(3)}g_{20}^{(4)})=0\\
     
     \pi_{7+6\tau}(-f_{14}^{(1)}\tilde{f}_{15}^{(6)}-f_{14}^{(4)}g_{16}^{(3)}
     -f_{14}^{(5)}f_{17}^{(2)})=0\\
     
     \pi_{9+6\tau}(-f_{14}^{(1)}f_{16}^{(2)}-f_{14}^{(4)}f_{18}^{(5)}-f_{14}^{(5)}g_{20}^{(4)})=0\\
     
     \pi_{10+6\tau}(f_{16}^{(2)}f_{17}^{(2)}+f_{14}^{(4)}(\tilde{g}_{20}^{(6)}-\tilde{f}_{15}^{(6)})-g_{20}^{(4)}
     \tilde{f}_{15}^{(6)})=0\\
     
     \pi_{11+6\tau}(f_{14}^{(5)}\tilde{f}_{15}^{(6)}-f_{14}^{(5)}\tilde{g}_{20}^{(6)}
     -\tilde{f}_{15}^{(6)}f_{18}^{(5)}+f_{16}^{(2)}g_{16}^{(3)})=0,\\
     \end{array}
\end{equation}
where $\tilde{g}_{20}^{(6)}=g_{20}^{(6)}-f_{14}^{(6)}$, $\tilde{f}_{15}^{(6)}=
f_{15}^{(6)}-f_{14}^{(6)}$ and $\pi_i$ denotes the projection operator in $t$ 
that annihilates the terms of degree not greater than $i$. We can observe that these 
equations does not depend of the coefficients $f_{14,1}, f_{17,2}, g_{16,3}, 
g_{20,4}, f_{18,5}$ and $f_{14,6i}, i=2,\ldots,\tau+1$. By considering the 
$(\tau+1)$-dimensional artinian algebra
\begin{align*}
A:=k[\epsilon]=\displaystyle\bigoplus_{j=0}^{\tau}k\epsilon^{j},\ \mbox{where}\ \epsilon^{\tau+1}=0,
\end{align*} 
we can write the equations in \eqref{thmfinal} in terms of five polynomial equations 
between $\tau+1$ elements of the $A$.
\begin{thm}\label{teo22}
The quadratic quasi-cone $\mathcal{Q}$ is isomorphic to the direct product
\begin{align*}
\mathcal{Q}=M\times N,
\end{align*}
where $M$ is the $(\tau+5)$-dimensional weighted space of weights $1, 2, 3, 4, 5$ 
and $6i, i=2,\ldots,\tau+1$, and $N$ is the quadratic quasi-cone consisting of vectors
\begin{align*}
(\omega_1,\ldots,\omega_{10})=\left(\displaystyle\sum_{j=0}^{\tau}\omega_{1j}\epsilon^j,\ldots,
\displaystyle\sum_{j=0}^{\tau}\omega_{10,j}\epsilon^j\right),
\end{align*}
such that satisfying the five equations
\begin{equation*}
\begin{array}{ll}
\omega_1\omega_{10}+\omega_4\omega_5+\omega_3\omega_7-\omega_3\omega_8+\omega_4\omega_6=0,\\
-\omega_1\omega_9-\omega_4\omega_{5}-\omega_3\omega_7=0,\\
-\omega_1\omega_{2}-\omega_5\omega_8-\omega_6\omega_7=0,\\
\omega_2\omega_{3}+\omega_5(\omega_{10}-\omega_9)-\omega_6\omega_9=0,\\
\omega_7\omega_9-\omega_7\omega_{10}-\omega_8\omega_9+\omega_2\omega_4=0,\\
\end{array}
\end{equation*}
in the artinian algebra $A$. 
\end{thm}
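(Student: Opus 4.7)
My plan is to parallel the proof of Theorem \ref{teo21} and exhibit an explicit change of variables identifying $\mathcal{Q}$ with $M\times N$. First I would observe that after absorbing the partial polynomial $\f[14,6]$ into the combinations $\tilde{f}_{15}^{(6)}=\f[15,6]-\f[14,6]$ and $\tilde{g}_{20}^{(6)}=\g[20,6]-\f[14,6]$, the five congruences \eqref{thmfinal} depend only on the ten partial polynomials $\f[14,1], \f[17,2], \f[16,2], \g[16,3], \f[14,4], \g[20,4], \f[14,5], \f[18,5], \tilde{f}_{15}^{(6)}, \tilde{g}_{20}^{(6)}$. Moreover, because each summand appears inside a projection $\pi_j$ that annihilates $t$-monomials of degree $\leq j$, the five low-degree coefficients $f_{14,1}, f_{17,2}, g_{16,3}, g_{20,4}, f_{18,5}$ (of weights $1,\dots,5$) together with the $\tau$ coefficients $f_{14,6i}$ for $i=2,\dots,\tau+1$ of the now-free $\f[14,6]$ do not appear in \eqref{thmfinal}. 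These $5+\tau$ unconstrained coefficients, with their natural weights, span the direct factor $M$.

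Next, I would organize the remaining $10(\tau+1)$ coefficients as ten elements $\omega_1,\dots,\omega_{10}$ of $A=\k[\epsilon]/(\epsilon^{\tau+1})$ by assigning one of the above ten partial polynomials to each $\omega_i$ and setting $\omega_{ij}$ equal to the coefficient of the $j$-th highest-degree surviving $t$-monomial in that partial polynomial. The correspondence is dictated by the structure of \eqref{thmfinal}: the term $\f[14,1]\tilde{g}_{20}^{(6)}$ under $\pi_{13+6\tau}$ forces $\omega_1\leftrightarrow\f[14,1]$ and $\omega_{10}\leftrightarrow\tilde{g}_{20}^{(6)}$; the term $\f[14,4]\g[16,3]$ forces $\omega_4\leftrightarrow\f[14,4]$ and $\omega_5\leftrightarrow\g[16,3]$; similar readings of the remaining summands produce consistent assignments for $\omega_2,\omega_3,\omega_6,\omega_7,\omega_8,\omega_9$.

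The verification then rests on the identity that if $p^{(a)}, q^{(b)}$ are two partial polynomials whose relevant coefficients have been packed into $\omega_r,\omega_s\in A$ as above, then the coefficient of $\epsilon^k$ in $\omega_r\omega_s\in A$ equals the coefficient of $t^{N-6k}$ in the polynomial product $p^{(a)}q^{(b)}$, where $N$ depends on the formal degrees of the two factors; under this identification the truncation $\epsilon^{\tau+1}=0$ corresponds precisely to the projection $\pi_j$, and each of the five congruences \eqref{thmfinal} becomes one of the five stated quadratic equations in $A$. The main obstacle will be the combinatorial bookkeeping: one must check that a single global labelling of the $\omega_{ij}$ simultaneously matches all five congruences, i.e.\ that the same pair $(\omega_r,\omega_s)$ is always associated to the same pair of partial polynomials and that the $t$-degree shifts $N$ line up compatibly with the projection indices $7+6\tau,9+6\tau,10+6\tau,11+6\tau,13+6\tau$. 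This is a finite check mirroring the one implicit in the proof of Theorem \ref{teo21}; once carried out, the isomorphism $\mathcal{Q}\cong M\times N$ follows immediately.
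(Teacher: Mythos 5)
Your plan is essentially the paper's own proof: for the companion Theorem \ref{teo21} the paper does exactly this --- it writes down an explicit table $\omega_{ij}=\dots$ identifying each $\omega_i$ with a (tilde-corrected) partial polynomial, splits off the coefficients that the congruences do not involve as the weighted factor $M$, and observes that the projections $\pi_j$ correspond to the truncation $\epsilon^{\tau+1}=0$ in $A$, which is all the verification the paper supplies. One correction to your sample assignment: the global system forces $\omega_4\leftrightarrow g_{16}^{(3)}$ and $\omega_5\leftrightarrow f_{14}^{(4)}$ (read off from $\omega_2\omega_4\leftrightarrow f_{16}^{(2)}g_{16}^{(3)}$ in the fifth equation and $\omega_5\omega_8\leftrightarrow f_{14}^{(4)}f_{18}^{(5)}$ in the third, once $\omega_2,\omega_8$ are pinned down), i.e.\ the opposite of what you wrote; the symmetric term $\omega_4\omega_5$ alone cannot decide this, which is precisely the cross-equation consistency check you correctly identify as the remaining finite work.
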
    

% % % % % % % % % % % % % % % %
%comments: \dim M=\tau+5, \dim N=7\tau+7, N\subset A^{10\tau+10}.
% % % % % %\omega_{1j}:=f_{14, 1+6\tau-6j}, \omega_{2j}:=f_{16, 8+6\tau-6j}, \omega_{3j}:=f_{17, 2+6\tau-6j}, 
%\omega_{4j}:=g_{16, 3+6\tau-6j}, \omega_{5j}:=f_{14, 4+6\tau-6j}, 
%\omega_{6j}:=g_{20, 4+6\tau-6j}, \omega_{7j}:=f_{14, 5+6\tau-6j}, \omega_{18}:=f_{18, 5+6\tau-6j}, 
%\omega_{9j}:=\tilde{f}_{15, 12+6\tau-6j}, \omega_{10,j}:=\tilde{g}_{20, 6+6\tau-6j}.  
% % % % % % % % % % % % % % % % % % % % % % % %

% By expressing these 
% five equations in terms of polynomial equations between ten elements of the 
% $(\tau+1)$-dimensional artinian algebra 
% \begin{align*}
% A:=k[\epsilon]=\displaystyle\bigoplus_{j=0}^{\tau}k\epsilon^{j},\ \mbox{where}\ 
% \epsilon^{\tau+1}=0,
% \end{align*} 
% %we find the  dimension of the quasi-cone. More precisely, follows by induction in $g$

\begin{cor}\label{boundfam2}
We have $\dim\mathfrak{Q}_\N=8\tau+12$. Thus
\begin{align*}
\dim\M\leq 8\tau+11.
\end{align*}
\end{cor}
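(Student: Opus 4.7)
The proof strategy closely parallels that of Corollary \ref{cor13}. By Theorem \ref{teo22} we have a decomposition $\mathcal{Q}_{\N} = M \times N$ where $M$ is the $(\tau+5)$-dimensional weighted space. Consequently, it is enough to show that $\dim N = 7(\tau+1) = 7\tau+7$, for then
\[
\dim \mathcal{Q}_{\N} \;=\; \dim M + \dim N \;=\; (\tau+5)+(7\tau+7) \;=\; 8\tau+12.
\]
The bound $\dim \M \leq 8\tau+11$ follows immediately, since the moduli space is $\mathcal{X}/\mathbb{G}_m(\k)$ with $\mathcal{X} \subseteq \mathcal{Q}_{\N}$ the quasi-cone of Theorem \ref{thm11}, and the $\mathbb{G}_m(\k)$-action on $\mathcal{X} \setminus \{0\}$ has one-dimensional orbits.

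To compute $\dim N = 7(\tau+1)$, I plan to argue by induction on $\tau$, exploiting the filtration of the artinian algebra $A = \k[\epsilon]/\epsilon^{\tau+1}$ by powers of $\epsilon$. For the base case $\tau = 0$ (where $A = \k$) the five quadratic forms of Theorem \ref{teo22} live in $\mathbb{A}^{10}$, and I would verify by a Jacobian computation at a generic point that their rank is $3$, giving $\dim N = 10-3 = 7$. The structural reason is that, after the affine change of variable $\omega_{10} \mapsto \omega_{10}-\omega_9$ (which is already visible in the factor $\omega_5(\omega_{10}-\omega_9)$ of the fourth equation and in $\omega_7\omega_9-\omega_7\omega_{10}$ of the fifth), the five quadrics can be put in the shape of the $4\times 4$ Pfaffians of a $5\times 5$ skew-symmetric matrix with linear entries in the $\omega_i$. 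By the Buchsbaum--Eisenbud structure theorem for Gorenstein ideals of codimension $3$, this confirms $\dim N = 7$ in the base case.

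For the inductive step, I would use the truncation map $A_\tau = \k[\epsilon]/\epsilon^{\tau+1} \twoheadrightarrow A_{\tau-1} = \k[\epsilon]/\epsilon^{\tau}$, which induces a surjective morphism $N_\tau \twoheadrightarrow N_{\tau-1}$. The generic fibre is cut out in the $10$-dimensional affine space of top-degree coefficients by the \emph{linearisations} of the five equations (obtained by fixing the lower-degree coefficients at a generic point of $N_{\tau-1}$ and collecting the $\epsilon^\tau$-terms). These linearisations have the same $5\times 5$ Pfaffian structure as in the base case, so their rank is again $3$, and hence each fibre has dimension $7$. Combining with the inductive hypothesis $\dim N_{\tau-1} = 7\tau$ gives $\dim N_\tau = 7\tau+7$, which is what is required.

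The main obstacle is the verification of the Pfaffian presentation at the base case, or equivalently the rank computation at a generic point. Once the skew-symmetric matrix exhibiting the five equations as Pfaffians is identified, flatness of $A$ over $\k$ ensures the rank condition propagates to every truncation, and the induction runs mechanically. Substituting back into Theorem \ref{teo22} then completes the proof of $\dim \mathcal{Q}_{\N} = 8\tau+12$ and of the claimed upper bound on $\dim \M$.
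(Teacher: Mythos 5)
Your overall route coincides with the paper's: Corollary \ref{boundfam2} is stated there without any written proof, resting on the product decomposition $\mathcal{Q}=M\times N$ of Theorem \ref{teo22} and (as with the sibling Corollary \ref{cor13}) an unspecified ``induction on $\tau$''. Your reduction to $\dim N=7(\tau+1)$, the arithmetic $(\tau+5)+7(\tau+1)=8\tau+12$, and the passage to $\dim\M\le 8\tau+11$ via $\M\cong\mathcal{X}/\mathbb{G}_m$ with $\mathcal{X}\subseteq\mathcal{Q}$ are exactly what is intended. Your observation that, after the substitution $\omega_{10}\mapsto\omega_{10}-\omega_9$ and replacing the first equation by the sum of the first two, the five quadrics become three-term Pl\"ucker-type relations cutting out a codimension-$3$ locus in $\mathbb{A}^{10}$ is a useful addition for the base case; note only that Buchsbaum--Eisenbud does not by itself force codimension $3$ for a special skew-symmetric matrix, so the generic rank-$3$ Jacobian computation is the step that actually carries the weight there.

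The genuine gap is in the inductive step. $N_\tau$ is the $\tau$-th jet scheme of the quadratic cone $N_0\subset\mathbb{A}^{10}$, and the fibre-dimension inequality you need is $\dim N_\tau\le\dim N_{\tau-1}+\max_{\bar x}\dim(\text{fibre over }\bar x)$, with the maximum taken over \emph{all} fibres, not the generic one. Your linearised system in the top-degree coefficients $y\in\k^{10}$ has matrix equal to the Jacobian of the five quadrics evaluated at the reduction $x_0$ of $\bar x$ modulo $\epsilon$, and its rank is $3$ only when $x_0$ is a smooth point of $N_0$. Over points with $x_0=0$ (for instance $\bar x=0$, where all $\omega_i\equiv 0\bmod\epsilon$) the linear part vanishes and the fibre can be all of $\k^{10}$, so the argument as written only yields $\dim N_\tau\le 7\tau+10$. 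Since the corollary is used precisely as an upper bound on $\dim\M$, this is the part that must be supplied: one has to stratify $N_{\tau-1}$ by the order of vanishing of the $\omega_i$ in $\epsilon$ and check that the loss of Jacobian rank on the deeper strata is compensated by their smaller dimension. Appealing to ``flatness of $A$ over $\k$'' does not address this. The asserted surjectivity of $N_\tau\to N_{\tau-1}$ is likewise unproved, though it is not needed for the upper bound.
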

     
% % % % % % % % % % % % % % % % % % % % % % % % % %
% % % % % % % % % % % % % % % % % % % %
%On the other hand, computing the effective weight of $\N$ we have $\ewt(\N)=10\tau+5$, 
%and so by theorem \eqref{2teo5}
%\begin{align*}
%\dim\mathcal{M}_{g,1}^{\N}\geq 3(6+6\tau)-2-(5+10\tau)=8\tau+11.
%\end{align*}
%This implies
%\begin{align*}
%\dim\mathcal{M}_{g,1}^{\N}=8\tau+11.
%\end{align*}
% % % % % % % % % % % % % %
% % % % % % % % % % % % % % %
     
 \section{Collecting known dimensions}

As noted in section 2 of this paper, if a numerical semigroup $\N$ is negatively graded then 
the dimension of $\M$ is equal to Deligne--Pinkham's upper bound $2g-2+\lambda(\N)$, which
is also equal to Pflueger's bound $3g-2-\mathrm{ewt}(\N)$.
Additionally, it is also know, cf. \cite{N16}, that the dimension of $\M$ is equal to Pflueger's bound for
all numerical semigroups whose genus is not greater than $6$.

In the following table \ref{tab1} we collect all numerical semigroups of genus $g\leq 6$
and compare the bounds given by Deligne--Pinkham and Pflueger. Of course we just consider
non-negatively graded numerical semigroups. Notations, 
D--P stands for the Deligne--Pinkham's bound, NP for Pflueger's bound, and finally 
$\dim T^{1,+}$ for the dimension of the positive graded part of the first
cohomology module of the cotangent complex associated to $\k[\N]$, namely
$\dim T^{1,+}:=\sum_{s=1}^{\infty}\dim T^{1}(\k[\N])_{s}$, see theorem \ref{t1busc} below and
\cite{Bu80}.

% Now let's consider all the semigroups that are not negatively graded of genus not 
% greater than $6$ and organize them in a table, where NP denotes the lower bound and %\footnote{The lower bounds in blue are better 
% %that the lower bound of Eisenbud and Harris in \cite{EH87}.}; 

\begin{table}[htb]
\caption{non-negatively graded semigroups of genus $\leq 6$}
\label{tab1}
\begin{tabular}{lcccc}
gaps& NP& $\dim\mathcal{M}_{g,1}^{\N}$& D--P & $\dim T^{1,+}$\\ \hline
1, 2, 4, 5, 8& 9 & 9 & 10 & 1\\ \hline
1, 2, 3, 5, 7& 10 & 10 & 11 & 1\\ \hline
1, 2, 3, 6, 7& 9 & 9 & 10 & 1\\ \hline
1, 2, 4, 5, 7, 10& 11 & 11 & 12 & 1\\ \hline
1, 2, 4, 5, 8, 11& 10 & 10 & 11 & 1\\ \hline
1, 2, 3, 5, 6, 9& 12 & 12 & 13 & 1\\ \hline
1, 2, 3, 5, 6, 10& 11 & 11 & 12 & 1\\ \hline
1, 2, 3, 5, 7, 9& 11 & 11 & 13 & 2\\ \hline
1, 2, 3, 5, 7, 11& 10 & 10 & 11 & 1\\ \hline
1, 2, 3, 6, 7, 11& 10 & 10 & 11 & 1\\ \hline
1, 2, 3, 4, 6, 8& 13 & 13 & 14 & 1\\ \hline
1, 2, 3, 4, 6, 9& 12 & 12 & 13 & 1\\ \hline
1, 2, 3, 4, 7, 8& 12 & 12 & 13 & 1\\ \hline
1, 2, 3, 4, 7, 9& 11 & 11 & 12 & 1\\ \hline
1, 2, 3, 4, 8, 9& 10 & 10 & 12 & 2\\
\end{tabular}
\end{table}

Let us now compare, see table \ref{tab2} below, the lower bound given by Pflueger with the upper bound
obtained in section four of this paper, cf. corollaries \ref{cor13} and \ref{boundfam2}. % for the symmetric semigroups
%$\langle 6, 3+6\tau, 4+6\tau, 7+6\tau, 8+6\tau\rangle$ and 
%$\langle 6, 7+6\tau, 8+6\tau, 9+6\tau, 10+6\tau\rangle$ with $\tau\geq 1$.
We also include the upper bound obtained by Contiero and Stoehr in \cite[Cor. 4.5]{c2013}
for the symmetric semigroup $\langle 6, 2+6\tau, 3+6\tau, 4+6\tau, 5+6\tau\rangle$ with
$\tau\geq 1$.

\begin{table}[htb]
\caption{$\dim\M$ for three families of semigroups}
\label{tab2}
\begin{tabular}{cccccc}
semigroup& NP & CFV-CS& D--P & $\dim T^{1,+}$\\\hline
$\langle 6, 3+6\tau, 4+6\tau, 7+6\tau, 8+6\tau\rangle$ & $8\tau+7$ & $8\tau+7$& $12\tau+5$ & $4\tau-2$\\ \hline
$\langle 6, 7+6\tau, 8+6\tau, 9+6\tau, 10+6\tau\rangle$ & $8\tau+11$ & $8\tau+11$& $12\tau+11$ & $4\tau$\\ \hline
$\langle 6, 2+6\tau, 3+6\tau, 4+6\tau, 5+6\tau\rangle$ & $8\tau+5$ & $8\tau+5$&$12\tau+1$ &  $4\tau-4$\\
\end{tabular}
\end{table}

We summarize the dimensions of $\M$ which appears in table \ref{tab1} and table \ref{tab2}, 
and also the dimension of $\M$ given by the theorem due Rim and Vitulli on negatively graded semigroups, in the next corollary.

\begin{cor} For each numerical semigroup $\N$ of genus $g\leq 6$, or any negatively graded numerical semigroup $\N$, 
or one of the following symmetric semigroups \linebreak
$\langle 6, 3+6\tau, 4+6\tau, 7+6\tau, 8+6\tau\rangle$, $\langle 6, 7+6\tau, 8+6\tau, 9+6\tau, 10+6\tau\rangle$
or $\langle 6, 2+6\tau, 3+6\tau, 4+6\tau, 5+6\tau\rangle$, we get
$$3g-2-\mathrm{ewt}(\N)=\dim\M=2g-2+\lambda(\N)-\dim T^{1,+}(\k[\N]).$$
\end{cor}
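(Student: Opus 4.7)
The plan is to verify the two equalities in each of the three specified classes of semigroups. Since Pflueger's bound $3g-2-\mathrm{ewt}(\N)\le \dim\M$ (Theorem \ref{NPbound}) and the Deligne--Pinkham bound $\dim\M\le 2g-2+\lambda(\N)$ (Theorem \ref{delupper}) are always available, the strategy in each case is to squeeze $\dim\M$ between matching upper and lower bounds equal to the Pflueger value, and then to confirm numerically that $2g-2+\lambda(\N)-\dim T^{1,+}(\k[\N])$ takes the same value.

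For semigroups of genus $g\le 6$, the equality $\dim\M=3g-2-\mathrm{ewt}(\N)$ was already established by Pflueger in \cite{N16}. To finish this case I would verify, row by row against Table \ref{tab1}, that $\bigl(2g-2+\lambda(\N)\bigr)-\dim T^{1,+}(\k[\N])$ reproduces the NP value; each $\dim T^{1,+}$ is extracted from the Lichtenbaum--Schlessinger description of $T^{1}(\k[\N])$ via Theorem \ref{t1busc}.

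For a negatively graded semigroup $\N$, the right-hand equality is immediate: by definition $\dim T^{1,+}(\k[\N])=0$, and Rim--Vitulli \cite[Cor.~5.14]{RV77} gives $\dim\M=2g-2+\lambda(\N)$. To handle the left-hand equality I would apply the inequality $3g-2-\mathrm{ewt}(\N)\le 2g-2+\lambda(\N)-\dim T^{1,+}(\k[\N])$ stated at the end of the introduction: substituting $\dim T^{1,+}=0$ gives $3g-2-\mathrm{ewt}(\N)\le \dim\M$, and combined with the reverse inequality of Theorem \ref{NPbound} this forces equality throughout.

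For the three parametric families of symmetric multiplicity-six semigroups, Pflueger supplies $\dim\M\ge 8\tau+c$ while Corollaries \ref{cor13} and \ref{boundfam2} of the present paper and \cite[Cor.~4.5]{c2013} supply the matching upper bound $\dim\M\le 8\tau+c$, with $c=7,11,5$ respectively; this pins down $\dim\M$ and yields the leftmost equality. The right-hand equality then reduces to the numerical checks $12\tau+5-(4\tau-2)=8\tau+7$, $12\tau+11-4\tau=8\tau+11$ and $12\tau+1-(4\tau-4)=8\tau+5$, once the values of $\dim T^{1,+}(\k[\N])$ in Table \ref{tab2} are justified. I expect this last point to be the main obstacle: for each family one must describe $T^{1}(\k[\N])$ through its multigraded pieces, starting from the nine quadratic generators of $\mathcal{I}(\D_{\N}\cap\mathbb{A}^{5})$ exhibited in Lemmas \ref{lem11} and \ref{lem21} together with their explicit syzygies, and then isolate the positive-weight components to read off $4\tau-2$, $4\tau$, and $4\tau-4$.
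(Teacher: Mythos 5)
Your overall route coincides with the paper's: the corollary is assembled by compiling Pflueger's result that his bound is attained in genus $\leq 6$ together with the $\dim T^{1,+}$ column of Table \ref{tab1}, the Rim--Vitulli theorem for negatively graded semigroups, and, for the three families, the matching lower bound (Theorem \ref{NPbound}) and upper bounds (Corollaries \ref{cor13} and \ref{boundfam2} and \cite[Cor.~4.5]{c2013}), followed by the arithmetic $12\tau+5-(4\tau-2)=8\tau+7$, $12\tau+11-4\tau=8\tau+11$, $12\tau+1-(4\tau-4)=8\tau+5$. You are also right that the only part of the families' case not already written out in Section 4 is the justification of the $\dim T^{1,+}$ entries of Table \ref{tab2}; the cleanest way to get them is the combinatorial formula of Theorem \ref{t1busc} rather than a syzygy computation, but either works.

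There is, however, a genuine gap in your treatment of the negatively graded case. Theorem \ref{NPbound} is a \emph{lower} bound, $\dim\M\geq 3g-2-\ewt(\N)$; it has no ``reverse inequality'' to invoke. The final lemma with $\dim T^{1,+}=0$ gives $3g-2-\ewt(\N)\leq 2g-2+\lambda(\N)=\dim\M$, which points in the \emph{same} direction as Pflueger's bound, so your two inequalities are two copies of the same estimate and nothing is squeezed: the left-hand equality $3g-2-\ewt(\N)=\dim\M$ does not follow. What is actually needed is the opposite inequality $\ewt(\N)\leq g-\lambda(\N)$ for negatively graded $\N$ (equivalently, that the inequality of the final lemma is an equality in that case, which by its proof amounts to $\ewt(\N)=\sum_{\ell}\sharp A_{\ell}$ and $\sum_{\ell}\dim V_{\ell}=0$ over the positive $\ell\notin\End(\N)$). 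Note that this inequality genuinely fails outside the negatively graded class, e.g.\ for $\N=\langle 6,7,8\rangle$ one has $\ewt=12>8=g-\lambda$, so it must be derived from negative gradedness and cannot be a formal consequence of the bounds you quote. The paper itself only asserts this coincidence at the opening of Section 5; to make your proof complete you must either prove this combinatorial identity or supply a citation for it.
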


\begin{rmk}
We also recall that for the following two symmetric semigroups \linebreak $\N=<6,7,8>$ and $\N=<6,7,15>$, Pflueger's bound
does not provide the exact\- dimension of the $\M$, but one can see that $\dim\M=2g-2+\lambda(\N)-\dim T^{1,+}(\k[\N]).$
for this two particular semigroups.
 \end{rmk}
 
%  For each semigroup of the above table we can to verify that dimension of 
% $\mathcal{M}_{g,1}^{\N}$ is smaller than or equal to $2g-2+\lambda-\dim T^{1,+}(\k[\N]|\k)$.
%     \textbf{escrever mais}        
%      % % % % % % % % % % % % % % % % % % % % % % % % % % % % %
 
%where $T^{1,+}(\k[\N]|\k)$ is the positive part of the 
%first cohomology group $T^{1}(\k[\N]|\k)$ of the complex cotangent of the 
%semigroup algebra $\k[\N]|\k$.

The dimension of the homogeneous part of degree $\l$ of the cotangent complex $T^{1}(\k[\N])$ can be
easily computed using a description of the cotangent complex given by Buchsweitz in \cite{Bu80}.
Let $\N:=\langle a_1,\dots,a_r\rangle$ be a numerical semigroup of genus $g>1$. By a theorem
due to Herzog, the ideal of $$C_{\N}:=\{(t^{a_1},\dots,t^{a_r})\,;\,t\in\k\}\subset\mathbb{A}^{r}$$
can be generated by isobaric polynomials $F_i$ which are differences of two monomials
$$F_i:=X_{1}^{\alpha_{i1}}\dots X_{r}^{\alpha_{ir}}-X_{1}^{\beta_{i1}}\dots X_{r}^{\beta_{ir}}$$
with $\alpha_i\cdot\beta_i=0$. As usual, the weight of $F_i$ is $d_i:=\sum_j a_j\alpha_{ij}=\sum_j a_j\beta_{ij}$.
For each $i$ let $v_i:=(\alpha_{i1}-\beta_{i1},\dots,\alpha_{ir}-\beta_{ir})$
be a vector in $\k^{r}$. 

\begin{thm}[cf. Thm. 2.2.1 of \cite{Bu80}]\label{t1busc}
For each $\l\notin\mathrm{End}(\N)$, 
$$\dim T^{1}(\k[\N])_{\l}=\#\{i\in\{1,\dots,r\}\,;\,a_i+\l\notin\N\}-\dim V_{\l}-1$$
where $V_{\l}$ is the subvector space of $\k^{r}$ generated by the vectors $v_i$
such that $d_i+\l\notin\N$. It also true that
$$\dim T^{1}(\k[\N])_s=0,\ \forall\,s\in\mathrm{End}(\N).$$
 \end{thm}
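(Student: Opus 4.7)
The plan is to apply the Jacobi--Zariski exact sequence of the cotangent complex to the presentation $R := \k[\N] = P/I$, where $P := \k[X_1,\dots,X_r]$ is graded by $\deg X_j = a_j$, and then to read off the degree-$\l$ piece. By Herzog's theorem, $I$ is generated by the binomials $F_i = X^{\alpha_i}-X^{\beta_i}$ of weight $d_i$ with $\alpha_i\cdot\beta_i=0$; since $P$ is smooth over $\k$, the Jacobi--Zariski sequence yields the four-term exact sequence of graded $R$-modules
\begin{equation*}
0\to \mathrm{Der}_\k(R,R)\to \mathrm{Der}_\k(P,R)\to \mathrm{Hom}_R(I/I^2,R)\to T^1(R/\k;R)\to 0,
\end{equation*}
reducing the computation of $\dim T^1(\k[\N])_\l$ to that of the three outer graded pieces in weight $\l$.

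The two leftmost terms are handled quickly using the fact that $R_m = \k\cdot t^m$ if $m\in\N$ and $R_m = 0$ otherwise. A degree-$\l$ $\k$-derivation $P\to R$ is determined by its values on the generators $X_j$, so $\dim\mathrm{Der}_\k(P,R)_\l = \#\{j : a_j+\l\in\N\}$. A graded derivation $R\to R$ of degree $\l$ is an additive function $c:\N\to\k$, equivalently a vector $c\in\k^r$ satisfying $c\cdot v_i = 0$ for each $i$, with the extra constraint that $c_{a_j}t^{a_j+\l}\in R$, i.e.\ $c_{a_j}=0$ whenever $a_j+\l\notin\N$. Since $\gcd(a_1,\dots,a_r)=1$, the $v_i$'s span the hyperplane $H := \{v\in\k^r : \sum_j a_j v_j = 0\}$, so the orthogonal $H^\perp$ is the one-dimensional subspace $\k\cdot(a_1,\dots,a_r)$; imposing the vanishing constraint then gives $\dim\mathrm{Der}_\k(R,R)_\l = 1$ if $\l\in\mathrm{End}(\N)$ and $0$ otherwise.

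The technical core is the computation of $\mathrm{Hom}_R(I/I^2,R)_\l$. A degree-$\l$ homomorphism $\phi$ is prescribed by scalars $c_i\in\k$ with $\phi(\bar F_i) = c_i\,t^{d_i+\l}$, forced to vanish whenever $d_i+\l\notin\N$ and further constrained by the linearized first syzygies of the $F_i$ modulo $I^2$. Using the combinatorial description of the toric syzygies of Herzog's binomials, the plan is to check that the assignment $c_i := c^\ast(v_i)$ identifies $\mathrm{Hom}_R(I/I^2,R)_\l$ with the subspace of the dual $H^\ast$ that annihilates $V_\l$, yielding
$\dim\mathrm{Hom}_R(I/I^2,R)_\l = \dim H - \dim V_\l = r - 1 - \dim V_\l$.

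Combining the three dimension counts gives
$\dim T^1(\k[\N])_\l = (r-1-\dim V_\l) - \#\{j:a_j+\l\in\N\} + \dim\mathrm{Der}_\k(R,R)_\l$.
For $\l\notin\mathrm{End}(\N)$ the last summand vanishes and the right-hand side rearranges to $\#\{i : a_i+\l\notin\N\} - \dim V_\l - 1$, as required. For $\l\in\mathrm{End}(\N)$, the defining property $\l+n\in\N$ for every $n\in\N^\ast$ forces $\#\{j:a_j+\l\in\N\}=r$ and $V_\l=0$, while the derivation term contributes $+1$, so $\dim T^1(\k[\N])_\l = (r-1) - r + 1 = 0$. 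The main obstacle I expect is the identification of $\mathrm{Hom}_R(I/I^2,R)_\l$ with the annihilator of $V_\l$ in $H^\ast$: this requires a careful analysis of which syzygies of Herzog's binomials survive modulo~$I^2$ in each weight, and it is the step in which the precise combinatorics of the semigroup~$\N$ enters the proof.
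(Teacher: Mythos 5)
First, note that the paper itself gives no proof of this statement: it is quoted from Buchweitz \cite{Bu80}, so your attempt can only be measured against the standard argument. Your skeleton is the right one, and your bookkeeping at the two outer terms of the four-term sequence is correct: $\dim\mathrm{Der}_{\k}(P,R)_{\l}=\#\{j: a_j+\l\in\N\}$, and (in characteristic zero) $\mathrm{Der}_{\k}(R,R)_{\l}$ is spanned by $t^{\l+1}\,d/dt$ when $\l\in\End(\N)$ and vanishes otherwise; the final arithmetic, including the vanishing for $\l\in\End(\N)$, also checks out.

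The genuine gap is exactly where you flag it: the identity $\dim\mathrm{Hom}_R(I/I^2,R)_{\l}=r-1-\dim V_{\l}$ is the entire content of the theorem, and you only announce a ``plan to check'' it via the combinatorics of the first syzygies of Herzog's binomials. As written this is not a proof, and the route you propose is also the hard way in: for a general numerical semigroup the minimal syzygies have no usable closed form, so ``which syzygies survive modulo $I^2$ in each weight'' is not a tractable case analysis. The step can be closed without touching the syzygies at all. Since $R=\k[\N]$ is a domain and $\operatorname{Spec} R$ is generically smooth in characteristic zero, the conormal sequence becomes exact on the left after tensoring with $Q=\k(t)$, and the resulting embedding $I/I^2\otimes_R Q\hookrightarrow\Omega_{P/\k}\otimes_P Q\cong Q^r$ sends $F_i\otimes 1$ to a unit multiple of $t^{d_i}v_i$ (evaluate $\partial F_i/\partial X_j$ at $(t^{a_1},\dots,t^{a_r})$ to get $v_{ij}t^{d_i-a_j}$). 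Because $R$ is a domain, $\mathrm{Hom}_R(I/I^2,R)\hookrightarrow\mathrm{Hom}_Q(I/I^2\otimes Q,Q)$, so a degree-$\l$ homomorphism is exactly a functional $c^{\ast}$ on the hyperplane $H$ via $\phi(F_i)=c^{\ast}(v_i)\,t^{d_i+\l}$, and $\phi$ takes values in $R$ if and only if $c^{\ast}(v_i)=0$ whenever $d_i+\l\notin\N$, i.e.\ if and only if $c^{\ast}$ annihilates $V_{\l}$. This gives $r-1-\dim V_{\l}$ at once and makes every syzygy condition automatic. With that substitution your outline becomes a complete proof; you should also state the characteristic-zero hypothesis explicitly, since both your computation of $\mathrm{Der}_{\k}(R,R)_{\l}$ (which uses $mc_1=0\Rightarrow c_1=0$) and the generic smoothness above require it.
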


\noindent The following question was shared (in private communications) with a large number of specialist 
in the field of deformation theory and moduli of curves. We do not know a 
partial answer or even an example where the inequality fails.

\begin{question}
For which numerical semigroups it is true that
$$\dim \mathcal{M}_{g,1}^{\N}\leq 2g-2+\lambda-\dim T^{1,+}(\k[\N])\,?$$
\end{question}
%For the class of negatively graded semigroups Rim--Vitulli in \cite[corollary 6.3]{RV77} 
%showed that this upper bound is always attained, more precisely they prove that the 
%corresponding monomial curve to the semigroup can be negatively smoothed and by applying 
%a theorem of Deligne \cite[theorem 2.27]{Del73} they obtain  $\dim \mathcal{M}_{g,1}^{\N}=2g+\lambda-2$.
% % % % % % % % % % % % % % % % % % % %
% % % % % % % % % % % % % % % % % %5
%We can also verify this upper bound for the family of symmetric semigroups worked 
%by Contiero and Stoehr in \cite{c2013}, $\N=\langle6, 2+6\tau, 3+6\tau, 4+6\tau, 
%5+6\tau\rangle$ of genus $g=1+6\tau$. They found cf. corollary 4.5 
%and theorem 3.1 an upper bound for dimension of the moduli variety
%\begin{align*}
%\dim\overline{\mathcal{M}_{g,1}^{\N}}\leq8\tau+5.
%\end{align*}
%By computing the effective weight of $\N$ we obtain $\ewt(S)=10\tau-4$ 
%and by the theorem \eqref{2teo5} follows that
%\begin{align*}
%\dim\mathcal{M}_{g,1}^{\N}\geq 3(1+6\tau)-2-(10\tau-4)=8\tau+5,
%\end{align*}
%what means $\dim\mathcal{M}_{g,1}^{\N}=8\tau+5$. On the other hand, cf. theorem 2.2.1 of \cite{Bu80} follows that
%\begin{align*}
%\dim T^{1,+}(\k[\N])=4\tau-2,
%\end{align*}
% % % % % % % % % % % % % % % % % % % % % % % % %
% % % % % % % % % % % % % % % % % %5
%and so 
%$$\dim \mathcal{M}_{g,1}^{\N}=2g-2+\lambda-\dim T^{1,+}(\k[\N]).$$

\noindent The last result of this paper shows that Pflueger's lower bound can not be
greater than $2g-2+\lambda(\N)-\dim T^{1,+}(\N)$.

 \begin{lem}
 For any numerical semigroup $\N$ of genus $g\geq 1$,
 $$3g-2-\ewt(\N)\leq 2g-2+\lambda(\N)-\dim T^{1,+}(\N).$$
 \end{lem}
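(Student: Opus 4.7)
The plan is to rewrite the desired inequality as
\[
\dim T^{1,+}(\k[\N]) \;\leq\; \ewt(\N) + \lambda(\N) - g
\]
and to prove it by computing $\dim T^{1,+}(\k[\N])$ exactly via Buchweitz's formula (Theorem \ref{t1busc}), recovering the bound by discarding a non-negative term.

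First I would observe that Theorem \ref{t1busc} gives $\dim T^1(\k[\N])_s=0$ for every $s\in\End(\N)$, and that every positive nongap lies in $\End(\N)$ by closure of $\N$ under addition. Hence the only indices contributing to $\dim T^{1,+}(\k[\N])$ are the gaps $\ell$ of $\N$ with $\ell\notin\End(\N)$, of which there are exactly $g-\lambda(\N)$ by the definition of $\lambda(\N)$ recalled in Theorem \ref{delupper}. Summing Buchweitz's formula over these indices gives
\[
\dim T^{1,+}(\k[\N]) \;=\; \sum_{\substack{\ell\text{ gap}\\ \ell\notin\End(\N)}}\#\{i:a_i+\ell\notin\N\} \;-\; \sum_{\substack{\ell\text{ gap}\\ \ell\notin\End(\N)}}\dim V_\ell \;-\; (g-\lambda(\N)).
\]

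The decisive combinatorial identity is
\[
\sum_{\substack{\ell\text{ gap}\\ \ell\notin\End(\N)}}\#\{i:a_i+\ell\notin\N\} \;=\; \ewt(\N),
\]
which I would prove as follows. For $\ell\in\End(\N)$ the count on the left is zero, since $a_i+\ell\in\N$ for every minimal generator $a_i$, so the sum may be extended to all positive gaps. It then enumerates pairs $(\ell,a_i)$ with $\ell$ a gap, $a_i$ a minimal generator, and $\ell+a_i$ again a gap. The assignment $(\ell,a_i)\mapsto(\ell+a_i,a_i)$ is a bijection onto the pairs $(\ell',a_i)$ with $\ell'$ a gap and $a_i$ a minimal generator satisfying $a_i<\ell'$: the inverse sends $(\ell',a_i)$ to $(\ell'-a_i,a_i)$, and $\ell'-a_i$ is automatically a gap, since otherwise $\ell'=(\ell'-a_i)+a_i$ would be a sum of two nongaps and hence lie in $\N$. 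The cardinality of the latter set is exactly $\ewt(\N)$ by its definition. Combining this identity with the previous display and dropping the non-negative term $\sum\dim V_\ell$ yields the desired bound. The whole argument is essentially bookkeeping; the only subtle point, and the one I expect to be the main obstacle, is the bijection exhibited above.
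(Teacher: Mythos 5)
Your proof is correct, and its first half coincides with the paper's: both start from Buchweitz's formula (Theorem \ref{t1busc}), observe that only the $g-\lambda(\N)$ gaps lying outside $\End(\N)$ contribute to $\dim T^{1,+}(\k[\N])$, and reduce the lemma to the combinatorial statement $\sum_{\l\notin\End(\N)}\#\{i: a_i+\l\notin\N\}\leq\ewt(\N)$ after discarding the nonnegative term $\sum_{\l}\dim V_{\l}$. Where you genuinely diverge is in how that combinatorial statement is established. The paper proves only the inequality, by induction on the genus: it adjoins the largest gap $\l_g$ to $\N$ to obtain a semigroup $\N'$ of genus $g-1$ and decomposes the index set $\{\l\notin\End(\N)\}$ accordingly (a step it leaves rather terse, ending with ``the result follows easily''). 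You instead prove the exact identity $\sum_{\l\ \mathrm{gap}}\#\{i: a_i+\l\notin\N\}=\ewt(\N)$ via the bijection $(\l,a_i)\mapsto(\l+a_i,a_i)$, whose inverse is well defined because $\l'-a_i$ is forced to be a gap whenever $\l'$ is --- the point you correctly flag as the crux, and which you verify. Your route is cleaner and strictly more informative: it shows that the only slack in the lemma is $\sum_{\l\notin\End(\N)}\dim V_{\l}$, i.e.\ it yields the closed formula $\dim T^{1,+}(\k[\N])=\ewt(\N)+\lambda(\N)-g-\sum_{\l\notin\End(\N)}\dim V_{\l}$, which pinpoints exactly when Pflueger's bound coincides with the quantity \eqref{chute}; the paper's induction delivers only the inequality and never makes the bijection explicit.
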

 \begin{proof}
For each $\l\in\mathbb{Z}$, set $A_{\l}:=\{i\in\{1,\dots,r\}\,;\,i+\l\notin\N\}$.
Using the theorem \ref{t1busc}, we obtain
\begin{equation*}
\dim T^{1,+}(\N)=\sum_{\ell \notin\End(\N)}(\# A_{\l}-\dim_{\k}V_{\l})-g+\lambda(\N).
\end{equation*} %where $A_{\l}:=\{i\,\vert\,a_i+\l\notin\N\}$ and $V_{\l}:=\langle \gamma_{j}\,\vert\,\l+d_j\notin\N\rangle$. 
Hence, we just have to prove that
%$\ewt(\H)-\sum_{\ell \notin\End(\H)}(\sharp A_{\l}-\dim_{\k}V_{\l})\geq 0\,.$
%In fact, we show that 
$\ewt(\N)-\sum_{\ell \notin\End(\N)}\sharp A_{\l}\geq 0$. We proceed
by induction on the genus $g$ of $\N$. The statement is trivial for $g=1$. If $\N$ is a numerical semigroup 
of genus $g>1$, whose biggest gap is $\l_{g}$, then consider the numerical semigroup 
$\N':=\N\cup\{\l_{g}\}$, whose genus is $g-1\geq 1$. It is clear that
$\{\l\notin\End(\N)\}=\{\l\notin\End(\N')\}\coprod\{\l\,\vert\,\l+a_i=\l_{g}\text{ and }\l+a_j\in\N,\,\forall\,j\neq i\}$.
Now the result follows easily.
\end{proof}

 \end{document}